\newtheorem{theorem}{Theorem}[section]
\newtheorem*{theoremA}{Theorem A}
\newtheorem*{theoremC}{Theorem C}
\newtheorem*{fact}{Fact}
\newtheorem{proposition}{Proposition}[section]
\newtheorem{lemma}{Lemma}[section]
\newtheorem{corollary}{Corollary}[section]
\newtheorem*{corollaryB}{Corollary B}
\newtheorem{example}{Example}[section]
\newtheorem{remark}{Remark}[section]
\newcommand{\tr}{\textup{tr}}
\title{The commuting variety of $\mathfrak{pgl}_n$}
\author{Vlad Roman}
\date{July 2024}
\begin{document}

\maketitle

\begin{abstract}
    We are considering the commuting variety of the Lie algebra $\mathfrak{pgl}_n$ over an algebraically closed field of characteristic $p >0$, namely the set of pairs $ \{ (A,B) \in \mathfrak{pgl}_n \times \mathfrak{pgl}_n  \mid [A,B]=0 \} $. We prove that if $n=pr$, then there are precisely two irreducible components, of dimensions $n^2+r-1$ and $n^2+n-2$. We also prove that the variety $\{ (x,y) \in GL_n(k) \times GL_n(k) \mid [x,y]=\zeta I \}$ is irreducible of dimension $n^2 +n/d$, where $\zeta$ is a root of unity of order $d$ with $d$ dividing $n$.
\end{abstract}

\section{Introduction}
    Commuting varieties have been studied ever since the 1950s, the first result being attributed to Motzkin and Taussky \cite{Motzkin-Taussky}, who showed that in the case of the Lie algebra $\mathfrak{gl}_n$, the commuting variety is irreducible of dimension $n^2+n$ (over a field of any characteristic). In the 1970s, Richardson \cite{Richardson} generalized the problem and proved that in characteristic zero, for a reductive Lie algebra $\mathfrak{g}$, the commuting variety is irreducible and moreover, its dimension is given by the formula $dim(\mathfrak{g}) + rank(\mathfrak{g})$. A paper of Levy \cite{Levy} extends Richardson’s theorem to a field of good characteristic. Levy used a result of Dieudonné to reduce the problem to distinguished nilpotent elements. In type $A$ the only distinguished nilpotent elements are the regular nilpotents, so in the case of $\mathfrak{pgl}_n$, the key difficulty is that of the regular nilpotent. Our approach here is more elementary and similar to the Motzkin and Taussky result. A paper which treats a related problem is that of Chen and Lu \cite{ChenLu}, who analyze the variety of pairs $(A,B)$ with $AB=qBA$ and describe the irreducible components and their dimensions.

    \par We first study the variety of pairs which have commutator equal to a scalar multiple of the identity and we prove the following theorem.
    \begin{theoremA}
        Let $k$ be an algebraically closed field of characteristic $p >0$ and assume that $p \mid n$. Then the variety $\mathcal{C} = \{ (A,B) \in \mathfrak{gl}_n \times \mathfrak{gl}_n  \mid [A,B]=I \}$ is irreducible of dimension $n^2+r$, where $n=pr$.
    \end{theoremA}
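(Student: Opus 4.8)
The plan is to single out a nonempty open subset $\mathcal{C}^{\circ}\subseteq\mathcal{C}$ that is irreducible of dimension $n^2+r$ and then prove it dense, much as the regular semisimple locus drives the Motzkin--Taussky argument for $\mathfrak{gl}_n$. Two elementary observations about an arbitrary $(A,B)\in\mathcal{C}$ start things off. First, $[A^p,B]=\sum_{i=0}^{p-1}A^i[A,B]A^{p-1-i}=pA^{p-1}=0$ (using $[A,B]=I$), so $A^p$, and symmetrically $B^p$, commutes with both $A$ and $B$; in particular $B$ preserves the generalized eigenspace decomposition of $A$, which coincides with that of $A^p$. Second, $\operatorname{ad}(A)(B)=I$ forces $I\in\operatorname{im}(\operatorname{ad} A)$, and since $\operatorname{ad} A$ is skew-adjoint for the nondegenerate trace form this is equivalent to $\tr(C)=0$ for all $C$ in the centralizer $\mathfrak{z}(A)$; a short analysis of centralizers of nilpotent matrices shows this holds precisely when every Jordan block of $A$ has size divisible by $p$. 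In particular $\chi_A$ is always a $p$-th power, and $\chi_A=h(x)^p$ with $h$ an arbitrary monic polynomial of degree $r$ when $A$ is regular.

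I would then set $\mathcal{C}^{\circ}=\{(A,B)\in\mathcal{C}:A\text{ is regular}\}$, which is open in $\mathcal{C}$ and nonempty (e.g.\ $A=\bigoplus_{i=1}^{r}(\mu_i I_p+J_p)$ with distinct $\mu_i$ admits a block-diagonal $B$, since $I_p\in\operatorname{im}(\operatorname{ad} J_p)$). The projection $(A,B)\mapsto A$ maps $\mathcal{C}^{\circ}$ onto $\mathcal{R}^{\mathrm{reg}}:=\{A\text{ regular}:\chi_A\text{ is a }p\text{-th power}\}$, which in turn maps onto an affine space $\mathbb{A}^r$ of characteristic polynomials with fibers single $GL_n$-orbits, hence is irreducible of dimension $n^2-n+r$; over $\mathcal{R}^{\mathrm{reg}}$ the fibers of $\mathcal{C}^{\circ}$ are the cosets of $\mathfrak{z}(A)=k[A]$, of constant dimension $n$, and $\operatorname{ad}(A)(B)=I$ has solutions depending algebraically on $A$ locally. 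So $\mathcal{C}^{\circ}$ is a torsor under a rank-$n$ vector bundle over an irreducible base, hence irreducible of dimension $n^2+r$, so its closure $\overline{\mathcal{C}^{\circ}}\subseteq\mathcal{C}$ is irreducible of dimension $n^2+r$. As a companion and a check I would bound $\dim\mathcal{C}\le n^2+r$ by stratifying $\mathcal{C}$ according to the number $\ell$ of distinct eigenvalues of $A$, their multiplicities, and the Jordan types of the nilpotent parts $N_i$ on the eigenspaces: the admissible $A$ in such a stratum form a set of dimension $n^2+\ell-\sum_i\dim\mathfrak{z}(N_i)$, while the fiber of $(A,B)\mapsto A$ is a coset of $\mathfrak{z}(A)=\bigoplus_i\mathfrak{z}(N_i)$, so the centralizer terms cancel and the stratum of $\mathcal{C}$ has dimension exactly $n^2+\ell$; since each eigenvalue of $A$ has multiplicity at least $p$, $\ell\le r$, with equality precisely on $\mathcal{C}^{\circ}$.

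The crux is density: $\mathcal{C}=\overline{\mathcal{C}^{\circ}}$, i.e.\ every $(A_0,B_0)\in\mathcal{C}$ is a limit of pairs with regular $A$-component. Since $B_0$ preserves the generalized eigenspaces of $A_0$, on each of which $A_0$ acts as $\lambda+N_\lambda$ with all Jordan blocks of $N_\lambda$ of size divisible by $p$, and since degenerating each $N_\lambda$ to a single Jordan block makes $A_0$ regular without merging spectra, one reduces — after the harmless shift $A\mapsto A-\lambda$ on each eigenspace — to the case $A_0=N$ nilpotent with all blocks of size divisible by $p$. Here I would build a one-parameter family $(A_t,B_t)\in\mathcal{C}$ with $A_0=N$, $B_t|_{t=0}=B_0$, and $A_t$ regular for $t\ne0$: take $A_t=N+tC$ for a block-scalar $C\in\mathfrak{z}(N)$ with $N+tC$ regular for $t\ne0$, and solve $[A_t,B_t]=I$ for $B_t$ order by order in $t$; the successive obstructions lie in intersections of $\operatorname{im}(\operatorname{ad} N)^{\perp}$ with $\mathfrak{z}(N)$-type subspaces and should vanish because $\tr$ annihilates $k[N]$ on each Jordan block (so that $\mathfrak{z}(N)\subseteq\operatorname{im}(\operatorname{ad} N)$).

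The conceptual reason behind all this — and an alternative route for the density step — is that $x^p$ and $y^p$ generate the center of the first Weyl algebra $A_1=k\langle x,y\rangle/(xy-yx-1)$, over which $A_1$ is Azumaya with the center's spectrum $\cong\mathbb{A}^2$, so that $\mathcal{C}=\operatorname{Rep}_n(A_1)$ is, via Morita equivalence, a fiber bundle over the commuting variety $\mathcal{C}(\mathfrak{gl}_r)$ of $\mathfrak{gl}_r$ with fiber $GL_n/GL_r$; irreducibility of $\mathcal{C}(\mathfrak{gl}_r)$ (Motzkin--Taussky) together with $\dim\mathcal{C}(\mathfrak{gl}_r)=r^2+r$ then yields irreducibility of $\mathcal{C}$ and the dimension $(n^2-r^2)+(r^2+r)=n^2+r$. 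I expect the main difficulty to be exactly this density step: carrying out the degeneration uniformly in $(A_0,B_0)$ — equivalently, setting up and verifying the fiber-bundle structure over $\mathcal{C}(\mathfrak{gl}_r)$, in particular its local triviality and the identification of the fibers.
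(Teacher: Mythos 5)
Your treatment of the open locus agrees in substance with the paper's: the paper also works with $\mathcal{D}=\{(A,B)\in\mathcal{C}: A\text{ regular}\}$, uses the fact that every Jordan block of $A$ has size divisible by $p$ (its Lemma 2.4, proved via traces on composition factors of Weyl-algebra modules), parametrizes $\mathcal{D}$ up to conjugacy by explicit normal forms $X(a_1,\dots,a_r)$ together with the coset $Y+k[X]$, and gets irreducibility and dimension $n^2+r$ by the same fibration count you give. The genuine gap is exactly where you predicted it: the density of $\mathcal{D}$ in $\mathcal{C}$. Your proposed mechanism --- fix $C\in\mathfrak{z}(N)$ block-scalar, set $A_t=N+tC$, and solve $[A_t,B_t]=I$ order by order in $t$ with $B_t|_{t=0}=B_0$ --- cannot work for all $B_0$, and the higher-order obstructions do \emph{not} all vanish. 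A dimension count shows why: the locus $\{(t,B): t\neq 0,\ [N+tC,B]=I\}$ is a torsor over $\mathbb{A}^1\setminus\{0\}$ under the rank-$n$ bundle $k[N+tC]$, so its closure meets $\{t=0\}$ in dimension at most $n$; but the fibre $\{B:[N,B]=I\}$ is a coset of $\mathfrak{z}(N)$, whose dimension strictly exceeds $n$ whenever $N$ is not regular (e.g.\ $N=J_p\oplus J_p$ gives $\dim\mathfrak{z}(N)=4p=2n$). So for most $B_0$ no such arc exists; only the first obstruction vanishes identically (your trace-form computation is correct at order one), and the procedure must break down at higher order. The same objection applies to "degenerating each $N_\lambda$ to a single Jordan block," which again moves $A$ and forces re-solving for $B$.

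The paper's fix is much simpler and avoids re-solving entirely: perturb along a regular element commuting with the \emph{other} matrix. Given $(A_0,B_0)\in\mathcal{C}$, choose $R$ regular with $[R,B_0]=0$ (always possible: adjust the eigenvalues of the Jordan form of $B_0$ blockwise); then $[A_0+\alpha R,\,B_0]=I$ for every $\alpha$, and $A_0+\alpha R$ is regular for all but finitely many $\alpha$ because the pencil spanned by $A_0$ and $R$ contains the regular element $R$ and regularity is open. Letting $\alpha\to 0$ gives $(A_0,B_0)\in\overline{\mathcal{D}}$ with no obstruction theory at all. (The paper states this with the roles of $A$ and $B$ interchanged, which is equivalent by the symmetry $(A,B)\mapsto(B,-A)$ of $\mathcal{C}$.) Your alternative Azumaya/Morita route to density is a genuinely different and attractive idea --- $A_1$ is indeed Azumaya of rank $p^2$ over $k[x^p,y^p]$ --- but as you note it requires establishing \'etale-local triviality and the identification of $\operatorname{Rep}_n(A_1)$ with a bundle over $\mathcal{C}_2(\mathfrak{gl}_r)$, none of which is carried out; as written it is a sketch, not a proof.
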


    As a corollary, we obtain the following.
    \begin{corollaryB}
        Let $k$ be an algebraically closed field of characteristic $p >0$ and assume that $p \mid n$. The commuting variety of $\mathfrak{pgl}_n$, $\mathcal{C}_2 (\mathfrak{pgl}_n) = \{ (A,B) \in \mathfrak{pgl}_n \times \mathfrak{pgl}_n  \mid [A,B]=0\}$ has two irreducible components, of dimensions $n^2+n-2$ and $n^2+r-1$, where $n=pr$.
    \end{corollaryB}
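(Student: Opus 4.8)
The plan is to lift the problem to $\mathfrak{gl}_n$ and then combine Theorem A with the Motzkin--Taussky theorem. Let $\pi\colon\mathfrak{gl}_n\to\mathfrak{pgl}_n$ be the quotient by the center $kI$; since $I$ is central, the bracket on $\mathfrak{pgl}_n$ is the class of the bracket of any pair of lifts. Hence $(A,B)\in\mathfrak{pgl}_n\times\mathfrak{pgl}_n$ commutes if and only if any lift $(\tilde A,\tilde B)\in\mathfrak{gl}_n\times\mathfrak{gl}_n$ satisfies $[\tilde A,\tilde B]\in kI$, so that $(\pi\times\pi)^{-1}(\mathcal{C}_2(\mathfrak{pgl}_n))=\mathcal{X}:=\{(\tilde A,\tilde B)\in\mathfrak{gl}_n\times\mathfrak{gl}_n\mid[\tilde A,\tilde B]\in kI\}$, a closed set stable under translation by $(kI)\times(kI)$. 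The hypothesis $p\mid n$ enters here: because $\tr(cI)=cn=0$, the equation $[\tilde A,\tilde B]=cI$ is compatible with the vanishing of the trace of a commutator even when $c\neq0$, so $\mathcal{X}$ is strictly larger than the commuting variety of $\mathfrak{gl}_n$.

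First I would decompose $\mathcal{X}=\mathcal{X}_0\cup\overline{\mathcal{X}'}$, where $\mathcal{X}_0=\{[\tilde A,\tilde B]=0\}$ is the commuting variety of $\mathfrak{gl}_n$ and $\mathcal{X}'=\{(\tilde A,\tilde B)\mid[\tilde A,\tilde B]=cI\text{ for some }c\neq0\}$; passing to the closure is harmless since $\mathcal{X}$ is closed. By Motzkin--Taussky, $\mathcal{X}_0$ is irreducible of dimension $n^2+n$. For the other piece, the scaling map $(\tilde A,\tilde B',c)\mapsto(\tilde A,c\tilde B')$ is an isomorphism $\mathcal{C}\times\mathbb{G}_m\xrightarrow{\sim}\mathcal{X}'$, with inverse sending $(\tilde A,\tilde B)$ to $(\tilde A,c^{-1}\tilde B,c)$ where $c$ is read off as the $(1,1)$-entry of $[\tilde A,\tilde B]$; so Theorem A shows that $\mathcal{X}'$, and hence $\overline{\mathcal{X}'}$, is irreducible of dimension $(n^2+r)+1$. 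Since translation by any element of $(kI)\times(kI)$ is an automorphism of $\mathfrak{gl}_n\times\mathfrak{gl}_n$, both $\mathcal{X}_0$ and $\overline{\mathcal{X}'}$ are again translation-stable.

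Next I would push these two saturated irreducible closed sets down along $\pi\times\pi$. Over each of their images the map has every fibre a free $2$-dimensional orbit of $(kI)\times(kI)$, so the images $C_1,C_2$ are irreducible closed subsets of $\mathcal{C}_2(\mathfrak{pgl}_n)$ with $C_1\cup C_2=\mathcal{C}_2(\mathfrak{pgl}_n)$, $\dim C_1=n^2+n-2$ and $\dim C_2=n^2+r-1$. It remains to check that these really are the two irreducible components, i.e. that neither contains the other; taking preimages, this is equivalent to the same statement for $\mathcal{X}_0$ and $\overline{\mathcal{X}'}$. The inclusion $\overline{\mathcal{X}'}\subseteq\mathcal{X}_0$ fails, since $\mathcal{X}'$ is nonempty (already by Theorem A) and contains pairs with nonzero commutator while every commutator vanishes on $\mathcal{X}_0$. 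For the reverse, when $n\geq3$ one has $n^2+n>n^2+r+1$, so $\mathcal{X}_0$ is too large to lie inside $\overline{\mathcal{X}'}$; in the single remaining case $n=2$, $p=2$ the two dimensions coincide, so $\mathcal{X}_0\subseteq\overline{\mathcal{X}'}$ would force $\mathcal{X}_0=\overline{\mathcal{X}'}$, contradicting that the $(1,1)$-entry of the commutator is identically $0$ on $\mathcal{X}_0$ but nonzero somewhere on $\mathcal{X}'\subseteq\overline{\mathcal{X}'}$. Thus there are exactly two irreducible components, of the stated dimensions.

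I do not expect a deep obstacle here: the real content is entirely in Theorem A, and the corollary is largely bookkeeping. The steps that need the most care are the transfer across $\pi\times\pi$ --- verifying that the relevant subvarieties are translation-stable, that their images are closed, and that the dimension drops by exactly $2$, so that no component is created or destroyed in the quotient --- together with the small-$n$ degeneracy in the final separation argument, where the two components happen to have equal dimension. Once Theorem A is in hand, none of this should require new ideas.
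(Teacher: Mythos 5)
Your proposal is correct and follows essentially the same route as the paper: decompose the relevant locus in $\mathfrak{gl}_n\times\mathfrak{gl}_n$ into the commuting variety (Motzkin--Taussky, dimension $n^2+n$) and the closure of $\{[\tilde A,\tilde B]=cI,\ c\neq0\}$ (Theorem A plus the $\mathbb{G}_m$-scaling, dimension $n^2+r+1$), separate the two pieces by dimension --- with the degenerate case $(n,p)=(2,2)$ handled via the disjointness of the open locus $c\neq0$ from the commuting variety, exactly as in the paper's Remark 2.1 --- and lose two dimensions in the quotient to $\mathfrak{pgl}_n$. Your handling of saturation and closedness of images under $\pi\times\pi$ is, if anything, more explicit than the paper's.
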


In section $2$ we also consider the other Lie algebras in type $A$ and describe an analogous result. In section $3$ we examine the variety of pairs of elements in the group $GL_n(k)$ whose commutator is a scalar multiple of the identity and prove the following.

\begin{theoremC}
    Let $k$ be an algebraically closed field and $\zeta$ be a root of unity of order $d$ with $d \mid n$. The variety $\mathcal{V} = \{ (x,y) \in GL_n(k) \times GL_n(k) \mid [x,y] = \zeta I\}$ is irreducible of dimension $n^2 +n/d$.
\end{theoremC}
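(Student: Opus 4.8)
The plan is to rewrite the relation $[x,y]=\zeta I$ in the form $xy=\zeta yx$, so that $\mathcal{V}$ is the variety of $n$-dimensional representations of the quantum torus $k\langle x^{\pm1},y^{\pm1}\rangle/(xy-\zeta yx)$; taking determinants in $xy=\zeta yx$ recovers the necessity of $d\mid n$. I would prove irreducibility by exhibiting $\mathcal{V}$ as the closure of an explicit irreducible open stratum, in the spirit of Motzkin--Taussky, and arrange the argument so that the case $d=1$ (where $\mathcal{V}$ is simply the commuting variety of $GL_n$) is covered uniformly.

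\textbf{The generic stratum.} Let $\mathcal{V}^{\circ}=\{(x,y)\in\mathcal{V}\mid x\text{ regular semisimple}\}$. From $xy=\zeta yx$ one gets $yxy^{-1}=\zeta^{-1}x$, so the eigenvalue multiset of $x$ is stable under multiplication by $\zeta$; since $x$ is invertible, when $x$ is regular semisimple its $n$ distinct eigenvalues split into $m=n/d$ orbits of size $d$ under $\langle\zeta\rangle$, and $y$ must cyclically permute the corresponding eigenlines inside each orbit. I would deduce that the characteristic polynomial of such an $x$ has the form $\prod_{a=1}^{m}(t^{d}-\eta_{a})$ with the $\eta_{a}\in k^{\times}$ pairwise distinct, that the locus $\mathcal{B}$ of such polynomials is irreducible of dimension $m$, and that $\mathcal{V}^{\circ}$ fibres over $\mathcal{B}$: the matrices $x$ with a given such characteristic polynomial form one $GL_n$-orbit $\cong GL_n/T$ (dimension $n^{2}-n$), and for fixed such $x$ the set $\{y\in GL_n\mid xy=\zeta yx\}$ is, in an eigenbasis of $x$, the set of invertible generalized permutation matrices supported on a fixed product of $m$ disjoint $d$-cycles, hence $\cong(k^{\times})^{n}$. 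All fibres being irreducible of constant dimension over an irreducible base, $\mathcal{V}^{\circ}$ is irreducible of dimension $m+(n^{2}-n)+n=n^{2}+n/d$; it is nonempty (take $x$ a block sum of scaled cyclic shifts and $y$ the matching diagonal ``clock'' matrices) and open in $\mathcal{V}$ (being regular semisimple is an open condition, and the spectral conditions above are automatic on $\mathcal{V}$). Hence $\overline{\mathcal{V}^{\circ}}$ is an irreducible component of $\mathcal{V}$ of the asserted dimension.

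\textbf{Density, and the main obstacle.} It remains to prove $\mathcal{V}=\overline{\mathcal{V}^{\circ}}$, which is the crux. Given $(x,y)\in\mathcal{V}$, write $x=x_{s}x_{u}$ for the multiplicative Jordan decomposition; applying uniqueness of Jordan decomposition to $yxy^{-1}=\zeta^{-1}x$ gives $yx_{s}y^{-1}=\zeta^{-1}x_{s}$ and $yx_{u}y^{-1}=x_{u}$, so $y$ commutes with $x_{u}$ and permutes the $x_{s}$-eigenspaces inside each $\langle\zeta\rangle$-orbit. Decomposing the underlying space into $\langle\zeta\rangle$-orbit blocks $W_{a}\cong(k^{e_{a}})^{\oplus d}$ (each preserved by $x$ and $y$) and conjugating block by block, I would reach a normal form in which $x$ acts on the $l$-th summand of $W_{a}$ as $\zeta^{l}D_{a}$ with $D_{a}=\nu_{a}(I+M_{a})\in GL_{e_{a}}$ ($\nu_{a}$ the orbit label, $M_{a}$ nilpotent), while $y$ sends the $l$-th summand identically onto the next for $l<d-1$ and the last back to the first by some $Z_{a}\in GL_{e_{a}}$; the relation defining $\mathcal{V}$ then becomes precisely $[D_{a},Z_{a}]=0$ for all $a$ (in particular $[M_{a},Z_{a}]=0$, which is where $[y,x_{u}]=0$ enters). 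Now the assignment sending a tuple $\big((D_{a},Z_{a})\big)_{a}$ to the pair just described is a morphism from $\prod_{a}\{(D,Z)\in GL_{e_{a}}\times GL_{e_{a}}\mid[D,Z]=0\}$ to $\mathcal{V}$; the source is a product of nonempty open subsets of the commuting varieties of the $\mathfrak{gl}_{e_{a}}$, hence irreducible by the Motzkin--Taussky theorem, its image is an irreducible subvariety of $\mathcal{V}$ containing $(x,y)$ (take $D_{a}=\nu_{a}(I+M_{a})$, $Z_{a}$ the original monodromy), and it meets $\mathcal{V}^{\circ}$ (take each $D_{a}$ regular semisimple with eigenvalues so generic that the $n$ scalars $\zeta^{l}\cdot(\text{eigenvalue of }D_{a})$ are pairwise distinct, and $Z_{a}$ any invertible matrix commuting with $D_{a}$). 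Since $\mathcal{V}^{\circ}$ is open in $\mathcal{V}$, this irreducible subvariety lies in $\overline{\mathcal{V}^{\circ}}$, so $(x,y)\in\overline{\mathcal{V}^{\circ}}$; as $(x,y)$ was arbitrary, $\mathcal{V}=\overline{\mathcal{V}^{\circ}}$. The step I expect to require the most care is this last one: pinning down the block normal form (notably that $[y,x_{u}]=0$ becomes $[M_{a},Z_{a}]=0$, and that the conjugations leave $x_{s}$ scalar on each summand) and then checking that a single irreducible family, driven by the ordinary commuting varieties of the smaller $\mathfrak{gl}_{e_{a}}$, simultaneously absorbs the unipotent part of $x$ and the monodromy $Z_{a}$ of the repeated eigenvalues while reaching the regular semisimple stratum. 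The dimension count of the first step and the specialization to $d=1$ are routine by comparison.
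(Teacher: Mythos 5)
Your proposal is correct and takes essentially the same route as the paper: both arguments put $(x,y)$ into a block normal form in which $y$ cyclically shifts the eigenspaces of the semisimple part of $x$, reduce the relation $[x,y]=\zeta I$ to commuting pairs of invertible matrices in smaller rank (so that irreducibility comes from Motzkin--Taussky), and extract the dimension $n^2+n/d$ from the regular semisimple locus. The only difference is packaging: the paper uses a single surjection from $GL_n(k)$ times the commuting variety of $GL_{n/d}(k)$, whereas you stratify by $\langle\zeta\rangle$-orbits of eigenvalues and show each stratum lies in the closure of the open regular semisimple stratum.
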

The above theorem is closely related to \cite{ChenLu}, while the two previous results, Theorem A and Corollary B are not.

Throughout the paper we will make use of the following general fact about varieties.
\begin{fact}
    Let $k$ be an algebraically closed field and $X$ be a subset of $GL_n(k)$. If $Y \subseteq GL_n(k)$ is irreducible and there exists a dominant map $GL_n(k) \times Y \rightarrow X$, then $X$ is irreducible as well.
\end{fact}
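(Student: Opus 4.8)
The plan is to deduce the statement from three standard facts about the Zariski topology: (i) $GL_n(k)$ is an irreducible variety; (ii) over an algebraically closed field, the product of two irreducible varieties is irreducible; and (iii) irreducibility is inherited both by the Zariski-continuous image of an irreducible space and by passing to closures, so that a subset of a variety is irreducible exactly when its closure is.

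First I would record that $GL_n(k)$ is irreducible: it is the principal open subset $\{\det \neq 0\}$ of the affine space $\mathfrak{gl}_n \cong \mathbb{A}^{n^2}$, and a nonempty open subset of an irreducible topological space is irreducible (indeed dense). Since $Y$ is irreducible by hypothesis and $k$ is algebraically closed, fact (ii) then gives that $GL_n(k) \times Y$ is irreducible. For an elementary check one may reduce to the affine case, where the coordinate ring of $GL_n(k) \times Y$ is a localization of a polynomial ring over the domain $k[Y]$, hence itself a domain. This is the only point at which the hypothesis that $k$ is algebraically closed is genuinely used.

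Next, let $f \colon GL_n(k) \times Y \to X$ be the given dominant morphism, so that $\overline{f(GL_n(k) \times Y)} = \overline{X}$ inside the ambient variety containing $X$. A morphism of varieties is continuous for the Zariski topology, and the continuous image of an irreducible space is irreducible; hence $f(GL_n(k) \times Y)$ is an irreducible subset, and therefore so is its closure $\overline{X}$. Finally, because a subset of a topological space is irreducible if and only if its closure is irreducible, applying this to the subset $X$ we conclude that $X$ is irreducible, as claimed.

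The argument is entirely formal, so I do not expect a real obstacle; if one ingredient is to be singled out it is fact (ii), the irreducibility of the product, which is false in general over a non-algebraically-closed field and is the only part of the proof specific to the present hypotheses.
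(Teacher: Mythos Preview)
Your argument is correct and is exactly the standard justification one would expect; the paper itself does not prove this statement but merely records it as a \emph{Fact} to be used repeatedly, so there is no alternative approach to compare against. One very minor remark: the hypothesis only asks that $Y$ be an irreducible \emph{subset} of $GL_n(k)$, not a subvariety, so your parenthetical appeal to the coordinate ring $k[Y]$ tacitly replaces $Y$ by its closure $\overline{Y}$; this is harmless, since $GL_n(k)\times Y$ is dense in the irreducible variety $GL_n(k)\times\overline{Y}$ and hence irreducible, but you may want to say so explicitly.
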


\section{The commuting variety of $\mathfrak{pgl}_n$}
In the following assume that $k$ is an algebraically closed field of characteristic $p$ and that $p \mid n$.
\par We are considering the variety
$$\mathcal{C}_2 (\mathfrak{pgl}_n) = \{ (A,B) \in \mathfrak{pgl}_n \times \mathfrak{pgl}_n  \mid [A,B]=0\}.$$
Let us define the following sets:
$$U_1=\{ (A,B) \in \mathfrak{gl}_n \times \mathfrak{gl}_n  \mid [A,B]=0\},$$
$$U_2(a) = \{ (A,B) \in \mathfrak{gl}_n \times \mathfrak{gl}_n  \mid [A,B]=aI\}$$
and set
$$U_2=\overline{ \bigcup_{a \neq 0} U_2(a) }.$$

\begin{remark}\label{remarkoncomponents}
  Note that if $(p,n)=(2,2)$, it is clear that $U_1$ and $U_2$ are distinct components. If $n>2$, then $\dim(U_1) > \dim(U_2)$ and thus $U_1$ is a component. Since  $\cup_{a \ne 0} U_2(a)$ is an open subvariety of $U_2$ and $\cup_{a \ne 0} U_2(a) \cap U_1 = \O$ (even modulo scalars), $U_2$ is not contained in $U_1$ and so neither subvariety is contained in the other. Therefore, the irreducibility of the $U_i$ implies these are the two components of $\{(A,B) \in \mathfrak{gl}_n \times \mathfrak{gl}_n \} \mid [A,B] = aI \ \text{for some} \ a\}$.  
\end{remark}


Since $U_1 = \mathcal{C}_2(\mathfrak{gl}_n)$ is the commuting variety, we focus on $U_2(a)$ with $a \in k, a \neq 0$. \par Without loss of generality we may assume that $a=1$. Indeed, the map $U_2(1) \rightarrow U_2(a)$ defined by $(A,B) \mapsto (aA,B)$ is an isomorphism.
Thus we consider pairs $(A,B)$ with commutator $AB-BA=I$. 

\par We observe that finding all such pairs corresponds to finding all representations $\rho : A_1 \rightarrow \mathfrak{gl}_n$ where $A_1$ is the first Weyl algebra. This is a non-commutative algebra over $k$ given by generators $x,y$ with relation
$$xy-yx=1,$$ namely
$$A_1=k \langle x,y \rangle /(xy-yx-1).$$
First let us consider the irreducible representations of this algebra. Note that by taking traces in the defining relation we must have
$$0=\tr(xy-yx)=\tr(1)=n,$$
where $n$ is the dimension of the irreducible representation. In particular, if $p \nmid n$, then $A_1$ does not have any nonzero finite-dimensional representations. Hence we must have $p \mid n$. In fact, the following holds.

\begin{lemma}
If $char(k)=p$, then any irreducible representation of $A_1$ is of dimension $p$.
\end{lemma}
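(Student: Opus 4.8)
The plan is to show that every finite-dimensional irreducible representation $V$ of $A_1$ contains a nonzero submodule spanned by at most $p$ vectors; by irreducibility this submodule must be all of $V$, so $\dim V \le p$, and since the trace computation recorded above already forces $p \mid \dim V$ (with $V \ne 0$), we get $\dim V = p$.

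Concretely, write $X = \rho(x)$ and $Y = \rho(y)$, so $XY - YX = I$. A one-line induction on $j$ from the defining relation gives $[x^j, y] = j\,x^{j-1}$ in $A_1$; specializing to $j = p$ and using $\mathrm{char}(k) = p$ shows that $x^p$ commutes with $y$ (and trivially with $x$), so $x^p$ lies in the center of $A_1$. Since $V$ is finite-dimensional irreducible over the algebraically closed field $k$, Schur's lemma forces $\rho(x^p) = X^p = \alpha I$ for some $\alpha \in k$. (One can avoid invoking Schur by instead choosing the vector $v_0$ below to be a common eigenvector of the two commuting operators $Y$ and $X^p$.)

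Now pick an eigenvector $v_0 \ne 0$ of $Y$, say $Y v_0 = \lambda v_0$, which exists since $k$ is algebraically closed, and set $v_j := X^j v_0$ for $j \ge 0$. From $[y, x^j] = -j\,x^{j-1}$ one computes $Y v_j = \lambda v_j - j\, v_{j-1}$. Let $W = \mathrm{span}_k\{v_0, \dots, v_{p-1}\}$. Then $W$ is stable under $X$ — indeed $X v_j = v_{j+1} \in W$ for $j < p-1$, while $X v_{p-1} = v_p = X^p v_0 = \alpha v_0 \in W$ — and it is stable under $Y$ by the displayed formula. Hence $W$ is a nonzero $A_1$-submodule of $V$, so $W = V$, and therefore $\dim V = \dim W \le p$.

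I expect the only genuinely delicate point to be the centrality of $x^p$: this is precisely where positive characteristic is used, and it is what makes the $X$-orbit of $v_0$ ``wrap around'' after $p$ steps so that $W$ closes up into a submodule. Everything else reduces to the elementary bracket identities in $A_1$ together with the trace argument already given. I would also note in passing that the standard $p$-dimensional module (with $X$ a single nilpotent Jordan block and $Y$ acting by $Y e_i = i\, e_{i-1}$, after rescaling so that $[X,Y] = I$) realizes this bound, so the lemma is not vacuous.
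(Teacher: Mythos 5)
Your proof is correct, and it takes a genuinely different route from the paper's. The paper quotes the full description of the center $Z(A_1)=k[x^p,y^p]$ (citing Revoy), passes to the $p^2$-dimensional quotient $\hat{A_1}=A_1/(x^p-\alpha,y^p-\beta)$, and invokes the structure theory of finite-dimensional simple algebras (the image of $A_1$ in $\textup{End}_k(V)$ is $M_r(k)$ with $r^2\le p^2$) to get the upper bound $\dim V\le p$; the lower bound comes from the same trace computation you cite. Your argument replaces all of that with an elementary cyclic-vector computation: the identity $[x^j,y]=jx^{j-1}$ gives centrality of $x^p$ alone (no need for the full center), Schur's lemma (or a common eigenvector of the commuting operators $Y$ and $X^p$) gives $X^p=\alpha I$, and the explicit formulas $Xv_j=v_{j+1}$, $Yv_j=\lambda v_j-jv_{j-1}$ show that the span of $v_0,\dots,v_{p-1}$ is a nonzero submodule, whence $\dim V\le p$. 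This is more self-contained and makes transparent exactly where characteristic $p$ enters (the orbit of $v_0$ under $X$ wraps around after $p$ steps). What the paper's approach buys in exchange is the isomorphism $\hat{A_1}\cong M_p(k)$ and hence the uniqueness of the irreducible module for fixed scalars $\alpha,\beta$, which is used later in the paper (``there is a single orbit of pairs $(A,B)$ with $A^p=\alpha^pI$ and $B^p=\beta^pI$''); your construction in fact yields this uniqueness too, since $v_0,\dots,v_{p-1}$ must be a basis and the action is then pinned down by $\alpha$ and $\lambda$, but you would need to say so explicitly if this lemma is to carry that weight. The only hypothesis you should make explicit is finite-dimensionality of $V$ (needed to produce the eigenvector $v_0$), which is implicit in the paper's setting as well.
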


\begin{proof}
First, it is known that the centre of $A_1$ is the algebra generated by the two elements $x^p,y^p$, namely $$Z(A_1)=k[x^p,y^p].$$
A proof of this fact can be found in a paper of Revoy \cite{Revoy} for instance.

Let $V$ be an irreducible $A_1$-module. By the above result, the elements $x^p,y^p$ are central, so they act as scalars in the representation:
$$x^p \mapsto \alpha I$$
and 
$$y^p \mapsto \beta I$$
for some $\alpha,\beta \in k$.

Now consider the algebra
$$\hat{A_1}=k \langle x,y \rangle /(xy-yx-1, x^p - \alpha, y^p - \beta) \cong A_1/(x^p - \alpha, y^p - \beta).$$
Note that a basis of this is given by $\{ x^i y^j\}$ for $i,j=0,\dots,p-1$ and so $\dim_k(\hat{A_1})=p^2$. Since $V$ is irreducible, the image of $A_1$ in $\text{End}_k (V)$ must be a simple algebra and so the image is isomorphic to $M_r(k)$, where $r=\dim_k(V)$. Therefore, $r=\dim_k(V) \le p$.

On the other hand, since $\hat{A_1}$ is finite dimensional over $k$, it has a linear representation and by taking traces, there are no modules $V$ with $\dim_k(V)<p$.

Now, because $\dim_k(\hat{A_1})=p^2$  (once we have fixed the scalars), it follows that $\hat{A_1} \cong M_p(k)$ and so $\hat{A_1}$ has a unique irreducible module, which is unique up to isomorphism.

\end{proof}

In the above proof, without loss of generality, we could have chosen the scalar $\delta$ instead of $\delta^p$. Indeed, for $k$ algebraically closed, the Frobenius endomorphism $t \mapsto t^p$ is an  automorphism. 

\par We have shown that any irreducible representation of $A_1$ is of dimension $p$ and they are parametrized by the homomorphism $Z(A_1)=k[x^p,y^p] \rightarrow k$.

Now return to the variety $$\mathcal{C}= U_2(1)=\{ (A,B) \in \mathfrak{gl}_n \times \mathfrak{gl}_n  \mid [A,B]=I\}.$$
First suppose that $n=p$. By the Weyl algebra argument above, for given $\alpha,\beta \in k$, there is a single orbit of pairs $(A,B)$ with $A^p=\alpha^p I $ and $B^p=\beta^p I$. One example of how $A$ and $B$ act is the following.

\begin{example}\label{exampleaction}

$$A=\begin{pmatrix}
0 & 0 & 0 & \dots & \alpha^p \\
1 & 0  & 0& \dots & 0   \\
0 & 1  & 0& \dots & 0 \\
\vdots & \vdots& \ddots  & \ddots & 0 \\
0 & 0& 0  & 1 & 0
\end{pmatrix}, 
B=\begin{pmatrix}
\beta & 1 & 0 & \dots & 0 \\
0 & \beta  & 2& \dots & 0   \\
0 & 0  & \beta& \dots & 0 \\
\vdots & \vdots& \ddots  & \ddots & p-1 \\
0 & 0& 0  & 0 & \beta
\end{pmatrix}$$

\end{example}
For distinct choices of $\alpha,\beta \in k$ we obtain distinct irreducible representations of $A_1$. So up to isomorphism and modulo the choice of $\alpha,\beta \in k$, all irreducible representations are of this form.

\par Using the Jordan-Chevalley decomposition we can write $$A=\alpha I+A_0$$ and  $$B=\beta I+B_0,$$
for some $\alpha,\beta \in k$, where $A_0, B_0$ are nilpotent with a single Jordan block.

In the following, let $n=pr$ for some $r \in \mathbb{N}$. We make the following claim:

\begin{proposition}\label{propositionirreducible}
    $\mathcal{C}$ is an irreducible variety.
\end{proposition}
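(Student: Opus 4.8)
The plan is to exhibit a dense open subvariety $\mathcal{C}^\circ\subseteq\mathcal C$ which is visibly irreducible, and to control the complement by a dimension count. First I would identify the generic locus: since $x^p$ is central in $A_1$, for any $(A,B)\in\mathcal C$ the matrix $A^p$ commutes with both $A$ and $B$, so each generalized eigenspace $V_\mu=\ker(A^p-\mu^pI)^n$ of $A$ is an $A_1$-submodule (using $A^p-\mu^pI=(A-\mu I)^p$ in characteristic $p$), hence of dimension divisible by $p$ by the trace obstruction used above. Thus $A$ has at most $r$ distinct eigenvalues; let $\mathcal C^\circ$ be the open locus where it has exactly $r$. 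There each $V_\mu$ is $p$-dimensional, hence irreducible, hence isomorphic to the module of Example~\ref{exampleaction} for the unique pair $(\mu,\beta_\mu)$, so $(A,B)\in\mathcal C^\circ$ exactly when $(A,B)$ is $GL_n$-conjugate to a direct sum of $r$ blocks as in Example~\ref{exampleaction} whose $A$-parts have pairwise distinct eigenvalues.

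Consequently $\mathcal C^\circ$ is the image of $GL_n\times\{(\alpha_i,\beta_i)_{i=1}^r:\alpha_i\text{ pairwise distinct}\}$ — an open subvariety of affine space, hence irreducible — under forming and conjugating such a sum; since the image of an irreducible variety under a morphism is irreducible (the principle underlying the Fact), $\mathcal C^\circ$ is irreducible, and counting the generic fibre (a copy of $(k^\times)^r$, up to the finite action of $S_r$) gives $\dim\mathcal C^\circ=n^2+r$. Next I would show the complement is smaller: stratifying $\mathcal C$ by the conjugacy type of $A$, if $A$ runs over a fixed type with $m$ distinct eigenvalues, the locus of such $A$ is irreducible of dimension $\bigl(n^2-\dim\ker(\operatorname{ad}A)\bigr)+m$, and the fibre of $(A,B)\mapsto A$ is, where non-empty, a coset of $\ker(\operatorname{ad}A)$; hence that stratum has dimension at most $n^2+m\le n^2+r$, with equality forcing $m=r$ and hence the stratum $\mathcal C^\circ$. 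Thus $\overline{\mathcal C^\circ}$ is a component of $\mathcal C$ of the maximal dimension $n^2+r$, $\dim(\mathcal C\setminus\mathcal C^\circ)<n^2+r$, and any other component is contained in $\mathcal C\setminus\mathcal C^\circ$.

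The remaining — and hardest — step is to prove that $\mathcal C^\circ$ is in fact dense, i.e.\ $\mathcal C\setminus\mathcal C^\circ\subseteq\overline{\mathcal C^\circ}$. Via the eigenspace decomposition and a translation this reduces to: for $n=ps$, the locus $\{(A,B)\in\mathcal C:A\text{ nilpotent}\}$ lies in the closure of the locus where $A$ has $s$ distinct eigenvalues. I would deduce this from the irreducibility of the commuting variety of $\mathfrak{gl}_s$ (Motzkin--Taussky), in which the pairs with regular semisimple first coordinate are dense. The link is that $A_1$ is an Azumaya algebra over its centre $k[x^p,y^p]\cong k[u,v]$ — it is free of rank $p^2$ and every fibre is $M_p(k)$ by the lemma above — and that over the line $\{u=0\}$, to which the modules with $A$ nilpotent are supported (up to a nilpotent thickening), this Azumaya algebra splits: $A_1/(x^{pN})\cong M_p\bigl(k[u,v]/(u^N)\bigr)$, with splitting module the polynomial representation $A_1/A_1x$. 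Morita equivalence then matches $\{(A,B)\in\mathcal C:A\text{ nilpotent}\}$, up to the $GL_{ps}$-action, with $\{(C,D)\in\mathfrak{gl}_s^2:[C,D]=0,\ C\text{ nilpotent}\}$ (where $C$ records the action of $A^p$), and a pair with $C$ regular semisimple corresponds to one in which $A$ has $s$ distinct eigenvalues; transporting a Motzkin--Taussky degeneration back then yields the claim. Granting density, $\mathcal C=\overline{\mathcal C^\circ}$ is irreducible, of dimension $n^2+r$, in agreement with Theorem~A.

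The main obstacle is precisely this transport of the degeneration. Since $A_1$ is a domain it has no nontrivial idempotents, so it is \emph{not} split over $k[u,v]$ globally; one must therefore carry out the degeneration inside a formal (or complete local) neighbourhood of the support of the relevant modules, where a splitting is available, rather than over all of $\operatorname{Spec}k[u,v]$. An elementary partial tool worth recording is that $(A+tB,B)\in\mathcal C$ for all $t$ and $(A+tB)^p=A^p+t^pB^p+c(t)I$, so $A+tB$ acquires as many distinct eigenvalues as the joint spectrum of $A^p$ and $B^p$ permits; this disposes of the case where that joint spectrum already has $r$ points, but not of the collapsed cases, which is exactly where the Azumaya argument above is needed.
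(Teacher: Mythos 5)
Your first two paragraphs are essentially correct: the generalized eigenspaces of $A$ are $A_1$-submodules of dimension divisible by $p$, so the locus $\mathcal{C}^\circ$ where $A$ has exactly $r$ distinct eigenvalues is as you describe, irreducible of dimension $n^2+r$, and your stratification bound $\le n^2+m$ shows $\overline{\mathcal{C}^\circ}$ is the unique component of maximal dimension. But that only bounds the dimension of any further component; irreducibility of $\mathcal{C}$ is precisely the assertion that $\mathcal{C}^\circ$ is dense, and this is the step you do not prove. The Azumaya/Morita strategy you sketch does not close the gap even in outline: the split quotient $A_1/(x^{pN})\cong M_p\bigl(k[u,v]/(u^N)\bigr)$ is only Morita-equivalent to modules on which $u=x^p$ acts nilpotently, so a Motzkin--Taussky degeneration of $(C,D)$ with $C_t$ regular semisimple for $t\ne 0$ does not live over $k[u,v]/(u^N)$ and cannot be transported back through this equivalence --- the whole construction is confined to the stratum $\{A\ \text{nilpotent}\}$ that you are trying to escape. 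To produce a family in which $A_t$ acquires $r$ distinct eigenvalues you would need a splitting of $A_1$ over an honest (\'etale or formal) neighbourhood of the line $u=0$, together with an algebraization step returning the formal family to an actual curve in $\mathcal{C}$; neither is supplied, and, as you note, $A_1$ does not split globally. So the proof is incomplete at its crucial point.

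For comparison, the paper closes exactly this gap by an elementary route that avoids the Azumaya formalism entirely. It first shows (Lemma~\ref{opendense}) that the locus $\mathcal{D}\subseteq\mathcal{C}$ where $A$ is \emph{regular} is dense, by perturbing inside $\mathcal{C}$ with a regular matrix commuting with $A$. Then Lemma~\ref{blocksofsizep} together with the explicit models $X(a_1,\dots,a_r)$, $Y$ shows that every regular $A$ occurring in $\mathcal{C}$ is conjugate to some $X(a_1,\dots,a_r)$, and since the centralizer of a regular matrix is $k[A]$, the full fibre over such an $A$ is $\{Y+f(X)\}$ with $\deg f\le n-1$. Hence $\mathcal{D}$ is the image of the irreducible variety $GL_n\times k^r\times P$ and $\mathcal{C}=\overline{\mathcal{D}}$ is irreducible. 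If you want to complete your argument, the missing density statement should be attacked by this sort of direct perturbation (your own observation that $(A+tB,B)\in\mathcal{C}$ for all $t$ is a step in that direction), not by splitting the Weyl algebra.
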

First, we make the following reduction: consider the same variety $\mathcal{C}$ with the first term $A$ a regular matrix. So let
$$\mathcal{D}=\{ (A,B) \in \mathcal{C}  \mid A \text{ regular}\}.$$

Recall that we call an element $A \in \mathfrak{gl}_n$ \textbf{regular} if it satisfies one of the following equivalent conditions:
\begin{itemize}
    \item A has a cyclic vector, meaning a vector $v$ such that the set $\{ v, Av, \dots, Av^{n-1}\}$ is a basis of $k^n$;
    \item the minimal polynomial of $A$ is equal to its characteristic polynomial;
    \item in its Jordan form, $A$ has at most one Jordan block for each eigenvalue;
    \item the centralizer $C(A)=\{ X \in \mathfrak{gl}_n \mid AX=XA\}$ has $\dim_k (C(A)) = n$;
    \item $C(A)=\{ f(A) \mid f \in k[x]\}$;
    \item $A$ is similar to a companion matrix;
    \item $\dim_k(\ker(A))=1$ (in the case where $A$ is nilpotent).
\end{itemize}

\begin{lemma}\label{opendense}
    The subvariety $\mathcal{D}$ is open and dense in $\mathcal{C}$.
\end{lemma}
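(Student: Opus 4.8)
The plan is to prove openness and density separately. Openness is routine: the regular locus $R \subseteq \mathfrak{gl}_n$ is Zariski open — for instance $A \in R$ iff $I, A, \dots, A^{n-1}$ are linearly independent, which is the nonvanishing of a suitable minor in the entries of $A$ — so $\mathcal{D} = \mathcal{C} \cap (R \times \mathfrak{gl}_n)$ is open in $\mathcal{C}$. For density I would show that every $(A,B) \in \mathcal{C}$ lies in $\overline{\mathcal{D}}$, by producing a morphism $\mathbb{A}^1 \to \mathcal{C}$, $t \mapsto (A_t,B_t)$, with $(A_0,B_0) = (A,B)$ and $A_t$ regular for generic $t$.

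The guiding structural fact, to be read off from the preceding discussion, is this: viewing $(A,B)$ as an $n$-dimensional representation $V$ of the Weyl algebra $A_1$, the composition factors of $V$ are $p$-dimensional, each pinned down by the pair of scalars $(a_i,b_i) \in k^2$ by which $x^p, y^p$ act on it, and the characteristic polynomial of $A$ is $\prod_{i=1}^{r}(\lambda^p - a_i)$. Hence $A$ is regular exactly when $a_1, \dots, a_r$ are pairwise distinct, and then $V \cong \bigoplus_i V_i$ splits as a direct sum of its $p$-dimensional irreducible submodules (non-isomorphic central characters of the $x^p$-action force the composition series to split). In particular $\mathcal{D}$, and hence $\overline{\mathcal{D}}$, is $GL_n$-stable, so it suffices to find, for each $(A,B)$ and possibly after conjugating, a curve in $\mathcal{C}$ through it whose generic member has pairwise distinct $a_i$.

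For semisimple $V$ this is explicit. After conjugating, take $A$ and $B$ block-diagonal with blocks the standard pairs of Example~\ref{exampleaction} attached to $(a_i,b_i)$; pick polynomials $a_i(t)$ with $a_i(0) = a_i$ and the $a_i(t)$ pairwise distinct for $t \ne 0$; keep the $b_i$ — hence the blocks of $B$, hence $B$ — unchanged, and in the $i$-th block of $A$ replace the top-right entry by $a_i(t)$. Each block still has commutator $I$ with its partner, so $(A_t,B_t) \in \mathcal{C}$; and $A_t$ has characteristic polynomial $\prod_i(\lambda^p - a_i(t))$, hence is regular for generic $t$. Thus $(A,B) = (A_0,B_0) \in \overline{\mathcal{D}}$.

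The main obstacle is the non-semisimple case, where $V$ has composition factors with coinciding central character linked by non-split extensions. Here the plan is to first deform $V$ inside $\mathcal{C}$ to a semisimple representation and then invoke the previous step. I see two routes. Conceptually: $A_1$ is an Azumaya algebra of rank $p^2$ over its centre $k[x^p,y^p] \cong k[\mathbb{A}^2]$, so $V$ corresponds under Morita equivalence to a coherent sheaf of finite length $r$ on $\mathbb{A}^2$; a finite-length sheaf on a smooth surface is a flat limit of $r$ skyscrapers at $r$ distinct points, which moreover may be chosen with pairwise distinct first coordinates — the Fogarty-type irreducibility of the relevant punctual moduli (for cyclic sheaves exactly the irreducibility of $\mathrm{Hilb}^r(\mathbb{A}^2)$, to which the general case reduces) — and one transports this family back through the equivalence. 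Concretely: writing the Jordan--Chevalley decomposition $A^p = S + N$ and noting that $A^p$, hence $S$ and $N$, lies in the centraliser $C(B)$, one has $[A+D,B] = I$ for every $D \in k[A^p,B^p]$, and one perturbs $A$ by such a $D$ to separate the generalised eigenspaces of $A^p$ and kill its nilpotent part. Making one of these deformations rigorous is the crux; the remaining bookkeeping — the dimension count and the semisimple case above — is routine. Once every $(A,B) \in \mathcal{C}$ is known to lie in $\overline{\mathcal{D}}$, the density statement, hence (with the first paragraph) the lemma, follows.
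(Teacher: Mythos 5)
Your openness argument and your treatment of the semisimple case are both correct, but the proof has a genuine gap exactly where you say it does: the non-semisimple case is the entire content of the lemma on your approach, and neither of your two routes closes it. Worse, route (b) provably fails. Take $A=\operatorname{diag}(A_0,A_0)$ and $B=\operatorname{diag}(B_0,B_0)$ with $(A_0,B_0)$ the standard $p\times p$ pair, so the module is $W\oplus W$ with $W$ irreducible. Then $A^p$ and $B^p$ act as scalars on all of $V$, so $k[A^p,B^p]$ acts by scalars, and $A+D$ has two Jordan blocks with equal eigenvalue for every $D$ in it: no such perturbation separates isomorphic composition factors or makes $A$ regular. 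In general an element of $k[A^p,B^p]$ acts with a single eigenvalue on each joint generalized eigenspace of $(A^p,B^p)$, so it can never change the multiset of central characters, which is precisely what "deform to semisimple" requires. Route (a) could in principle work but rests on the irreducibility of the relevant punctual Quot/Hilbert schemes on $\mathbb{A}^2$ plus a careful transport of flat families through the Morita equivalence — machinery far heavier than the lemma warrants, and you have not carried it out. A separate small error: "$A$ is regular exactly when the $a_i$ are pairwise distinct" is false in the "only if" direction — the matrix $X=X(0,\dots,0)$ built from $A_0$ and $B_0^{p-1}$ later in the section is regular nilpotent with all $a_i=0$. You only use the true "if" direction, but the claimed equivalence mischaracterizes $\mathcal{D}$.

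The paper's proof sidesteps all of this with an elementary perturbation that never mentions the Weyl algebra. For any $A\in\mathfrak{gl}_n$ there is a \emph{regular} $R$ commuting with $A$ (put $A$ in Jordan form and shift the blocks by pairwise distinct scalars). Then $[A,B+\alpha R]=I$ for all $\alpha$, and since the plane $\{\beta B+\alpha R\}$ is irreducible and meets the open regular locus (at $R$), the element $\beta B+\alpha R$ is regular for generic $(\alpha,\beta)$ with $\beta\neq 0$; rescaling, $B+\alpha R$ is regular for generic $\alpha$. This exhibits $(A,B)$ as a limit of pairs in $\mathcal{C}$ whose \emph{second} coordinate is regular, and the symmetry $(A,B)\mapsto(-B,A)$ of $\mathcal{C}$ converts this into density of the locus where the first coordinate is regular. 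If you want to salvage your write-up, replace the whole deformation-of-modules discussion with this one-step argument; alternatively, you must actually prove one of your two deformation claims, and only the Hilbert-scheme route has a chance.
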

    
\begin{proof}
    Note that for any $A \in \mathfrak{gl}_n$ there exists a regular matrix $R$ which commutes with $A$. Without loss of generality, we may assume that $A$ is in Jordan canonical form, so 
    $$A=\begin{pmatrix}
J_1 &  & \\
 & \ddots &  \\
 &  & J_k
\end{pmatrix},$$
where the $J_i$ are the Jordan blocks. For each $i$, let $\lambda_i$ be the corresponding eigenvalue.
Then the matrix $$R=\begin{pmatrix}
(\mu_1 -\lambda_1) I +J_1 &  & \\
 & \ddots &  \\
 &  & (\mu_k - \lambda_k) I +J_k
\end{pmatrix},$$
where the scalars $\mu_i -\lambda_i \in k$ are pairwise distinct, is a regular matrix and it commutes with $A$.
Next, note that for $\alpha \in k$ and $R$ regular commuting with $A$ we have

\begin{equation*}
\begin{split}
[A,B +\alpha R] & = [A,B] + \alpha [A,R] \\
 & = I + O\\
& = I. \\
\end{split}
\end{equation*}

\noindent Thus, for every $\alpha \in k$, $(A,B +\alpha R) \in \mathcal{C}$.
\par Note that the subvariety $Z_{\alpha,\beta}:=\{ \beta A+\alpha R \mid \alpha, \beta \in k\}$ of $\mathfrak{gl}_n$ is irreducible since it is the image of the irreducible $\mathbb{A}_k^2$ under a continuous map. Also, the set of regular matrices in $\mathfrak{gl}_n$ is open and intersects $Z_{\alpha,\beta}$. Therefore, the subset of regular matrices in $Z_{\alpha,\beta}$ must be dense in $Z_{\alpha,\beta}$ because of irreducibility.

\par Hence, every $(A,B) \in \mathcal{C}$ must be contained in the closure of some subset $\mathcal{D}_0$ of the variety $\mathcal{D}$, so $\mathcal{D}$ is open and dense in $\mathcal{C}$.

\end{proof}

We will need the following technical lemma.

\begin{lemma}\label{blocksofsizep}
    Let $char(k)=p$, $A \in \mathfrak{gl}_n$ and suppose $[A,B]=I$ for some $B \in \mathfrak{gl}_n$. Then each Jordan block of $A$ has size a multiple of $p$.
\end{lemma}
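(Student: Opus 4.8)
The plan is to reduce to the case where $A$ is a single nilpotent Jordan block and then apply a trace argument on a suitable invariant subspace. First I would use the Jordan–Chevalley decomposition $A = A_s + A_n$, and note that since $[A,B] = I$ and $I$ is central, a standard argument (differentiating the conjugation action, or using that $\operatorname{ad}(A)^p = \operatorname{ad}(A^{[p]})$ in characteristic $p$) shows the semisimple and nilpotent parts interact controllably; more concretely, I would pass to a generalized eigenspace $V_\lambda = \ker(A-\lambda I)^n$ for an eigenvalue $\lambda$ of $A$. The key point is that $V_\lambda$ need not be $B$-invariant, so one cannot naively restrict; instead the cleaner route is to observe that the relation $[A,B]=I$ forces a grading-type structure.

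So here is the approach I would actually carry out. Consider the operator $\operatorname{ad}(A)$ on $\mathfrak{gl}_n$ and the relation $\operatorname{ad}(A)(B) = I$. Decompose $\mathfrak{gl}_n = \bigoplus_{\lambda,\mu} \operatorname{Hom}(V_\mu, V_\lambda)$ according to the generalized eigenspaces of $A$. Writing $B = \sum B_{\lambda\mu}$ with $B_{\lambda\mu} \in \operatorname{Hom}(V_\mu, V_\lambda)$, the component of $[A,B] = I$ in $\operatorname{Hom}(V_\mu,V_\lambda)$ for $\lambda \ne \mu$ must vanish, and the diagonal block $\operatorname{Hom}(V_\lambda, V_\lambda)$ gives $[A|_{V_\lambda}, B_{\lambda\lambda}] = I|_{V_\lambda}$. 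Hence, replacing $A$ by $A|_{V_\lambda}$ and $B$ by $B_{\lambda\lambda}$, we reduce to the case where $A$ has a single eigenvalue $\lambda$; subtracting $\lambda I$ from $A$ (which does not change the commutator or the Jordan block sizes), we may assume $A$ is nilpotent.

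Now suppose $A$ is nilpotent with Jordan blocks of sizes $d_1 \ge d_2 \ge \cdots \ge d_s$. Let $W = \ker A^m \supseteq \ker A^{m-1} = W'$ for the filtration by powers of $A$; I want to show each $d_i$ is divisible by $p$. The cleanest finish: restrict attention to $W = \ker(A^m)$ for each $m$. From $[A,B]=I$ one gets $[A^m, B] = m A^{m-1}$ by induction (this is the standard Weyl-algebra identity $[x^m,y] = m x^{m-1}$), hence $B$ maps $\ker(A^m)$ into $\ker(A^{m-1}) + A^{-1}(\text{stuff})$ — more precisely, $B(\ker A^m) \subseteq \ker A^{m+1}$ is false in general, so instead I would use: the subspaces $\ker A^m$ are not $B$-stable, but the associated graded $\operatorname{gr} = \bigoplus \ker A^{m}/\ker A^{m-1}$ carries an action where $B$ shifts degree by $-1$ (since $[A,B]=I$ means $B$ lowers the "$A$-order" by one, analogously to $y = \partial/\partial x$ lowering polynomial degree... actually $A$ plays the role of $\partial$, $B$ the role of $x$). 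Concretely: on each quotient $Q_m := \ker A^m / \ker A^{m-1}$, the map $\bar B$ induced appropriately, composed with the isomorphism $A: Q_{m+1} \to Q_m$, lets us compare dimensions; the relation $[A,B]=I$ yields, after taking traces on the finite-dimensional space $\ker A^m$ (on which $A^{m-1} B - $ terms telescope), the identity $\operatorname{tr}(A^{m-1}|_{\ker A^m}\text{-related operator}) = m \dim(\ker A^m) \cdot(\text{something}) $. The honest version: taking the trace of $[A^m,B]=mA^{m-1}$ restricted to the $A$- and $B$-invariant... — since $\ker A^m$ is $A$-invariant but only "$B$-invariant up to $\ker A^{m-1}$", the trace of $mA^{m-1}$ on $\ker A^m$ equals $0$ modulo a correction, forcing $m \cdot (\dim \ker A^m - \dim\ker A^{m-1}) \equiv 0$, i.e. $m \cdot (\#\{i : d_i \ge m\}) \equiv 0 \pmod p$ for all $m$; running $m$ over $1, 2, \dots$ and taking successive differences pins down that every $d_i$ is a multiple of $p$.

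The main obstacle is the bookkeeping in that last step: the subspaces $\ker A^m$ are not $B$-invariant, so the trace computation needs the correct invariant (or "almost-invariant") subspace — the right fix is to work with the $A_1$-submodule generated by $\ker A$, or equivalently to invoke the Lemma already proved (every irreducible $A_1$-module has dimension $p$) together with the fact that a module on which $A$ acts with a single Jordan block is indecomposable, hence built as a (non-split) extension with all composition factors of dimension $p$, giving $p \mid d_i$ directly. I expect the slickest proof to go: the pair $(A,B)$ makes $k^n$ an $A_1$-module; restrict to the indecomposable summand corresponding to one Jordan block of $A$ (possible since the Jordan structure of $A$ refines the $A_1$-module decomposition... this itself needs the eigenvalue reduction above); an indecomposable finite-dimensional $A_1$-module has all composition factors of dimension $p$ by the Lemma, so its dimension — which equals the Jordan block size, because $A$ regular-nilpotent on an indecomposable forces $\dim\ker A = 1$ — is a multiple of $p$.
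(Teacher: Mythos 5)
Your eigenvalue reduction is sound (restricting to a generalized eigenspace $V_\lambda$ of $A$ and to the diagonal block $B_{\lambda\lambda}$ does give $[A|_{V_\lambda},B_{\lambda\lambda}]=I_{V_\lambda}$, so one may assume $A$ nilpotent), and the composition-factor argument correctly shows that every finite-dimensional $A_1$-module has dimension divisible by $p$, hence $p\mid\dim V_\lambda$. But that is strictly weaker than the lemma, and the step you lean on to close the gap --- ``restrict to the indecomposable summand corresponding to one Jordan block of $A$'' --- does not work. The individual Jordan block subspaces of a nilpotent $A$ are not $B$-invariant (one only has $B(\ker A^m)\subseteq\ker A^{m+1}$), so a Jordan block is not an $A_1$-submodule, and an indecomposable $A_1$-summand need not be a single Jordan block of $A$: nothing in your argument excludes, say, an indecomposable module of dimension $2p$ on which $A$ acts with blocks of sizes $p+1$ and $p-1$ --- ruling that out is precisely the content of the lemma. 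The alternative ``telescoping trace'' route in your middle paragraph is never actually carried out (you say so yourself), and even the congruence $m\cdot\#\{i: d_i\ge m\}\equiv 0\pmod p$ you hope to extract would not by itself force every $d_i$ to be divisible by $p$ (it is consistent, for example, with $p$ blocks of equal size $m$ for any $m$).

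The paper closes the gap differently, and you were one observation away from it. From the composition series one deduces not just $p\mid n$ but that $\tr(\rho(y))=0$ for \emph{every} solution: on each irreducible factor $y^p$ acts as a scalar $\beta I_p$ of trace $p\beta=0$, and $\tr(M^p)=\tr(M)^p$ in characteristic $p$, so each factor contributes trace $0$. Now if $A$ had a Jordan block of size $m$ with $p\nmid m$, the projection $M$ onto that block's subspace commutes with $A$ and has $\tr(M)=m\ne 0$ in $k$; then $(A,B+M)$ is another solution of $[A,B']=I$ with $\tr(B+M)\ne 0$, a contradiction. This perturbation trick is what lets one see individual Jordan blocks without needing them to be $B$-invariant; your proposal is missing exactly this idea.
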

A proof of this can be found in \cite{Feinberg} and \cite{Anderson} for instance, but we provide a different one here, which fits better in this context.
\begin{proof}
    Let $\rho: A_1 \rightarrow \mathfrak{gl}_n$ be a representation of $A_1$. Recall from the Weyl algebra argument before that the elements $x^p,y^p$ are central and any irreducible representation of $A_1$ is of dimension $p$.
    \par Notice that $\tr(\rho(x)^p)=\tr(\rho(x))^p$ since the Frobenius map preserves addition. Now consider a composition series of any representation $V$ of $A_1$:
    $$0=V_0 \subset V_1 \subset \dots \subset V_n = V.$$
    The trace is the sum of the traces on each composition factor, so we have
    $$\tr(\rho(x))= \sum_{i=1}^n \tr(\rho_{V_i/V_{i-1}}(x))$$
    for all $x \in A_1$.
    On each composition factor $\rho(x^p)$ and $\rho(y^p)$ are scalars, so their trace is $0$ and thus $\tr(\rho(x)) = \tr(\rho(y))=0$ as well.
    \par Now suppose $\rho(x)$ has a Jordan block of size $m$, not a multiple of $p$. Then $\rho(x)$ commutes with some element $M$ which has $\tr(M) \neq 0$. Let $\rho'(x)=\rho(x), \rho'(y)=\rho(y)+M$ be another representation of $A_1$. Then $[\rho(x),\rho'(y)]=I$, but $\tr(\rho'(y)) \neq 0$, a contradiction.
\end{proof}

Now consider the following elements: 
$$X=\begin{pmatrix}
A_0  & {B_0}^{p-1}  & & & \\
 & A_0   & {B_0}^{p-1} & & \\
 &  & \ddots &  &\\
 & & & \ddots & {B_0}^{p-1} \\
 & &  & & A_0   \\
\end{pmatrix},$$
which has block diagonal elements $A_0$ and entries above the diagonal blocks ${B_0}^{p-1}$ (so there are $r -1$ such blocks) and 
$$Y=\begin{pmatrix}
B_0 &  & \\
 & \ddots &  \\
 &  & B_0
\end{pmatrix}.$$

Here $A_0$ and $B_0$ are the (nilpotent) examples coming from the $p \times p$ case. We note that by construction we have $[X,Y] = I$.

\begin{lemma}
    $X$ is regular.
\end{lemma}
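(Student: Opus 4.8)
The goal is to show that the matrix $X$ constructed above is regular, i.e., has a one-dimensional kernel (the last characterization in the list, since $X$ is nilpotent).

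First I would observe that $X$ is nilpotent: it is block upper-triangular with nilpotent diagonal blocks $A_0$, hence nilpotent. By the equivalent conditions for regularity, it therefore suffices to prove $\dim_k \ker X = 1$. I would write a vector $v \in k^{pr}$ in block form $v = (v_1, \dots, v_r)$ with each $v_i \in k^p$, and unravel the equation $Xv = 0$. Reading the block rows from the bottom up, the last row gives $A_0 v_r = 0$, so $v_r \in \ker A_0$, which is one-dimensional since $A_0$ is a single $p \times p$ Jordan block. The $i$-th row (for $i < r$) gives $A_0 v_i + B_0^{p-1} v_{i+1} = 0$, i.e. $A_0 v_i = -B_0^{p-1} v_{i+1}$.

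The crux of the argument is then the following linear-algebra claim about the $p\times p$ matrices $A_0, B_0$ (the nilpotent parts from Example~\ref{exampleaction}): if $w \in k^p$ lies in $\ker A_0$ and $w \in \operatorname{im} A_0$, then $w = 0$; and more precisely, for a nonzero $w \in \ker A_0$, the vector $B_0^{p-1} w$ does \emph{not} lie in $\operatorname{im} A_0$. Granting this, I can run the recursion: $v_r \in \ker A_0$; if $v_r \ne 0$ then $B_0^{p-1} v_r \notin \operatorname{im} A_0$, so $A_0 v_{r-1} = -B_0^{p-1} v_r$ has no solution, forcing $v_r = 0$ (when $r > 1$); inductively every $v_i = 0$ for $i \geq 2$, and finally $v_1 \in \ker A_0$ is free. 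Hence $\dim_k \ker X = 1$ when $r > 1$, and the case $r = 1$ is immediate since then $X = A_0$ is a single Jordan block. To prove the claim I would use the explicit forms: $\ker A_0$ is spanned by the last standard basis vector $e_p$ (with $A_0 e_i = e_{i+1}$, $A_0 e_p = 0$ in the notation of Example~\ref{exampleaction}, taking $\alpha = 0$), while $\operatorname{im} A_0$ is spanned by $e_2, \dots, e_p$; and $B_0^{p-1}$ applied to $e_p$ must have a nonzero component along $e_1$. This last point follows because $B_0$ is regular (it is $\beta I$ plus a single Jordan block, with $\beta = 0$), so $\{e_p, B_0 e_p, \dots, B_0^{p-1} e_p\}$ is a basis — equivalently $e_p$ is a cyclic vector for $B_0$, which one checks from the bidiagonal shape of $B_0$ with nonzero superdiagonal entries $1, 2, \dots, p-1$ (all nonzero mod $p$). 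Alternatively and more cleanly: the pair $(A_0, B_0)$ generates the action of the simple algebra $M_p(k)$ on $k^p$, so $B_0^{p-1} e_p \in \operatorname{im} A_0 = (\operatorname{im} A_0)$ would be an $A_0$-invariant-type constraint that is easy to rule out directly.

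The main obstacle I anticipate is pinning down the claim that $B_0^{p-1} w \notin \operatorname{im} A_0$ for $0 \ne w \in \ker A_0$ cleanly, rather than by a brute-force entrywise computation. The conceptual way around this: note $\operatorname{im} A_0 = \ker(\text{projection onto } e_1\text{-coordinate functional that kills } e_2,\dots,e_p)$ — more precisely, since $A_0$ is a single Jordan block, $k^p / \operatorname{im} A_0$ is one-dimensional, and the composite map $k e_p = \ker A_0 \hookrightarrow k^p \xrightarrow{B_0^{p-1}} k^p \twoheadrightarrow k^p/\operatorname{im} A_0$ is nonzero precisely because $B_0^{p-1}$ sends the cyclic-vector data of $B_0$ onto a vector with nonzero image in the cokernel of $A_0$; since $B_0$ is regular with cyclic vector $e_p$ and $A_0^{p-1} = B_0^{p-1}$-type relations do not hold, this is nonzero. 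I would include the short explicit verification from Example~\ref{exampleaction} to make this airtight, since the two matrices there are completely concrete.
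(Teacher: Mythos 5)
Your argument is correct and proves the lemma, but it is organized differently from the paper's. The paper proceeds by induction on $r$: the critical case $r=2$ is handled by first invoking Lemma~\ref{blocksofsizep} to conclude that every Jordan block of $X$ has size a multiple of $p$ (so $X$ is either one block of size $2p$ or two of size $p$), and then showing $X^p \neq 0$ via the computation $Lv = (p-1)!\,v$ with $v=e_p$; the inductive step is a kernel computation much like yours. You instead run the kernel computation uniformly for all $r$ at once, reading $Xv=0$ block row by block row from the bottom and using the single fact that $B_0^{p-1}e_p = (p-1)!\,e_1 \notin \operatorname{im}A_0$ to force $v_2=\dots=v_r=0$. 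This is arguably cleaner: it avoids any appeal to Lemma~\ref{blocksofsizep} and to the computation of $X^p$, and reduces everything to the characterization of regularity for nilpotent matrices by $\dim_k\ker X = 1$. The price is that you must carry out the explicit bidiagonal computation for $B_0^{p-1}e_p$, but that is exactly the same computation the paper does for $Lv$, so nothing is lost.

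Two small corrections. First, the opening clause of your ``crux'' claim --- that $w \in \ker A_0 \cap \operatorname{im} A_0$ forces $w=0$ --- is false: with $A_0e_i = e_{i+1}$ one has $e_p = A_0e_{p-1} \in \operatorname{im}A_0$ and $e_p \in \ker A_0$. Fortunately you never use this; the recursion only needs the second, correct statement that $B_0^{p-1}w \notin \operatorname{im}A_0$ for $0 \neq w \in \ker A_0$. Second, the fact that $\{e_p, B_0e_p, \dots, B_0^{p-1}e_p\}$ is a basis does not by itself imply that $B_0^{p-1}e_p$ has nonzero $e_1$-component; you need the explicit observation that $B_0^{j}e_p$ is a nonzero scalar multiple of $e_{p-j}$ (the superdiagonal entries $1,\dots,p-1$ being units mod $p$), which you do promise to include --- make sure that explicit verification, rather than the cyclic-vector remark, carries the weight.
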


\begin{proof}
    Recall that $A_0$ is nilpotent. We proceed by induction on $r$. For the base case $r=1$ the result is immediate because in the $p \times p$ case $A$ and $B$ are regular and there is a single Jordan block. The critical case is $r=2$. Here $X$ and $Y$ are of the form
    $$X= \begin{pmatrix}
        A_0 & {B_0}^{p-1} \\
        0 & A_0\\
    \end{pmatrix},
    Y= \begin{pmatrix}
        B_0 & 0 \\
        0 & B_0\\
    \end{pmatrix}.$$
Note that $$[X,Y]=\begin{pmatrix}
        A_0 & {B_0}^{p-1} \\
        0 & A_0\\
    \end{pmatrix} 
    \begin{pmatrix}
        B_0 & 0 \\
        0 & B_0\\
    \end{pmatrix}
    - \begin{pmatrix}
        B_0 & 0 \\
        0 & B_0\\
    \end{pmatrix} 
    \begin{pmatrix}
        A_0 & {B_0}^{p-1} \\
        0 & A_0\\
    \end{pmatrix}$$
    $$= \begin{pmatrix}
        A_0 B_0 & {B_0}^{p} \\
        0 & A_0 B_0\\
    \end{pmatrix} - 
    \begin{pmatrix}
        B_0 A_0 & {B_0}^{p}  \\
        0 & B_0 A_0\\
    \end{pmatrix}
      = I,$$
      so by Lemma \ref{blocksofsizep} it follows that all Jordan blocks of $X$ have size a multiple of $p$. Note that either $X$ has a single Jordan block of size $2p \times 2p$ or two Jordan blocks, each of size $p$. Therefore, to see that $X$ is regular, it is enough to show that $X^p \neq 0$.

We claim that $X^p \neq 0$. From here, since all Jordan blocks of $X$ are of size a multiple of $p$, it would follow that $X$ must be a single Jordan block.
Note that
$$X^p = \begin{pmatrix}
        A_0 & {B_0}^{p-1} \\
        0 & A_0\\
    \end{pmatrix}^p= \begin{pmatrix}
        A_0^p & \sum_{k=0}^{p-1} A_0^{k} {B_0}^{p-1} A_0^{p-k-1} \\
        0 & A_0^p\\
    \end{pmatrix} = $$
    $$=\begin{pmatrix}
        0 & \sum_{k=0}^{p-1} A_0^{k} {B_0}^{p-1} A_0^{p-k-1} \\
        0 & 0\\
    \end{pmatrix} = \begin{pmatrix}
        0 & L \\
        0 & 0\\
    \end{pmatrix}.$$
Let $e_i$ denote the $i$-th canonical column vector. Without loss of generality we may assume $A_0$ and $B_0$ are as in Example \ref{exampleaction} with $\alpha = \beta = 0$, namely
$$A=\begin{pmatrix}
0 & 0 & 0 & \dots & 0 \\
1 & 0  & 0& \dots & 0   \\
0 & 1  & 0& \dots & 0 \\
\vdots & \vdots& \ddots  & \ddots & 0 \\
0 & 0& 0  & 1 & 0
\end{pmatrix}, 
B=\begin{pmatrix}
0 & 1 & 0 & \dots & 0 \\
0 & 0  & 2& \dots & 0   \\
0 & 0  & 0& \dots & 0 \\
\vdots & \vdots& \ddots  & \ddots & p-1 \\
0 & 0& 0  & 0 & 0
\end{pmatrix}.$$
Let $0 \neq v \in \ker(A_0)$ (recall that $\ker(A_0)$ is one dimensional since $A_0$ is regular nilpotent). Then we can take $v = e_p$. We notice that 
$$B_0^{p-1}v = (p-1)! e_1$$ and 
$$B_0^{p-1} e_i = 0 \text{ for $i<p$}.$$
Moreover,
$$A_0^s e_i = e_{i+s} \text{ for $i + s \leq p$}$$
and
$$A_0^s e_i = 0 \text{ for $i + s > p$}.$$
Hence, the only nonzero term in $Lv$ occurs for $k = p-1$ and so 
$$Lv = (\sum_{k=0}^{p-1} A_0^{k} {B_0}^{p-1} A_0^{p-k-1})v = A_0^{p-1}B_0^{p-1}e_p = A_0^{p-1}(p-1)!e_1 = (p-1)!v \ne 0,$$ so we obtain that $X^p \neq 0$, as required.

\par Considering the inductive step, we have $$ X = \left(\begin{array}{c|ccccc} 
	A_0 & {B_0}^{p-1} & & & \\ 
    \hline
	0 & A_0 & {B_0}^{p-1} & &\\
     & & \ddots & \ddots &\\
     & &   &A_0 & {B_0}^{p-1}\\
     & &   & & A_0\\
\end{array}\right)
= \begin{pmatrix}
        A_0 & * \\
        0 & \tilde{A_0}\\
    \end{pmatrix},$$
    where $\tilde{A_0}$ has $r$ blocks of size $p$ and $X$ has $r+1$. We are assuming $\tilde{A_0}$ is regular, so that $\dim_k(\ker(\tilde{A_0}))=1$. Let $v = (v_0,v_1,\dots,v_r)^T \in \ker(X)$ (where each component $v_i \in k^p$), so we have
    $$Xv = \begin{pmatrix}
        A_0 v_0 + {B_0}^{p-1} v_1\\
        \tilde{A_0} (v_1,\dots,v_r)^T
    \end{pmatrix}
    = \begin{pmatrix}
        0_{p \times p} \\
        0_{rp \times rp}\\
    \end{pmatrix}.$$
    Since the kernel of the lower block $\tilde{A_0}$ is one-dimensional by assumption, it must be of the form $\{ (\lambda e_p, 0, \dots, 0)^T\ \mid \lambda \in k \}$.
    Then $v_0$ must be of the form $(*,w,0,\dots,0)^T$ and thus we have reduced the problem to the case $r=2$ proved above.
\end{proof}

Now consider
$$X(a_1,\dots,a_r)=\begin{pmatrix}
A_0 + a_1 I & {B_0}^{p-1}  & & & \\
 & A_0 + a_2 I & {B_0}^{p-1} & & \\
 &  & \ddots &  &\\
 & & & \ddots & {B_0}^{p-1} \\
 & &  & & A_0 + a_r I \\
\end{pmatrix},$$

\par and
$$Y=\begin{pmatrix}
B_0 &  & \\
 & \ddots &  \\
 &  & B_0
\end{pmatrix}.$$

Notice that any regular element $X' \in \mathfrak{gl}_n$ with all Jordan blocks of size a multiple of $p$ is similar to some $X$ of this form (depending on the $a_i$). Furthermore, we have $[X,Y]=I$. Since $X$ is regular, its centralizer consists of polynomials in $X$. Therefore, the only solutions to the equation $[X,Y']=I$ are of the form $Y'=f(X)+Y$ for some polynomial $f$. Moreover, since $X = X(0,\dots,0)$ is regular, clearly $X(a_1, \dots, a_r)$ is regular as well for any choice of the $a_i$.
Thus, for a pair $(A,B) \in \mathcal{D}$, $A$ is conjugate to an element $X$ of the form described above with the given $a_i$, so up to conjugacy, all elements in $\mathcal{D}$ are of the form $(X,Y+f(X))$.
\par Now consider the set $$S=\{ (X(a_1,\dots,a_r),Y+f(X(a_1,\dots,a_r))) \mid a_1,\dots,a_r \in k, f \in P \},$$
where $P$ is the set of polynomials of degree at most $pr-1$ and the map
$$F:k^r \times P \rightarrow S,$$
where $$F((a_1,\dots,a_r),f)=(X(a_1,\dots,a_r),Y+f(X(a_1,\dots,a_r))).$$
Note that $F$ is a morphism from affine space and so its image is irreducible. Therefore $S$ is irreducible ($F$ is actually an isomorphism onto $S$).

Moreover, there is an affine map $G: GL_n(k) \times S \rightarrow \mathcal{D}$ given by 
$$G(g,A,B) = (gAg^{-1},gBg^{-1})$$
for $A,B \in S$. Notice that the above map is surjective and since $GL_n(k) \times S$ is irreducible, its image is as well. Since the image contains $\mathcal{D}$, it is dense in $\mathcal{C}$ and so $\mathcal{C}$ is irreducible. This proves Proposition \ref{propositionirreducible}.

In order to compute the dimension of $\mathcal{C}$, consider the generic case, where the matrix $X$ has $r$ distinct eigenvalues, which is open in the Zariski topology. The pair $(A,B) \in \mathcal{C}$ can be conjugated to a pair of the form
$$A(a_1,\dots,a_r)=\begin{pmatrix}
A_0 + a_1 I &   &   \\
 & \ddots &   \\
 &   & A_0 + a_r I 
\end{pmatrix}, 
B(b_1,\dots,b_r)=\begin{pmatrix}
B_0 + b_1 I &   &   \\
 & \ddots &   \\
 &   & B_0 + b_r I 
\end{pmatrix}
$$
and the corresponding module $M$ will split as a direct sum $$M=V_1 \oplus V_2 \oplus \dots \oplus V_r,$$
where the $V_i$ are irreducible of dimension $p$ and they are determined by the scalars $a_i,b_i \in k$, so $V_i=W(a_i,b_i)$. Now, the dimension of the centralizer of $M$ in $GL_n(k)$ is $r$. Consider the map $GL_n \times  k^r \times k^r \rightarrow \mathcal{C}$ given by $$(g, A(a_1, \dots, a_r), B(b_1, \dots, b_r)) \mapsto  (gAg^{-1}, gBg^{-1}).$$  Note that this is a dominant morphism and a generic fibre (with the $a_i$ and the $b_i$ distinct) has dimension $r$ and so we can compute $$\dim(\mathcal{C})=n^2+2r-r=n^2+r$$
and thus obtain Theorem A.

From the above, together with Remark \ref{remarkoncomponents}, we conclude that $\mathcal{C}_2(\mathfrak{pgl}_n)$ has two irreducible components, one of dimension $n^2+n-2$ (the commuting variety) and one of dimension $n^2+r-1$. For the second component, since the variety $\mathcal{C}$ is of dimension $n^2+r$, one more dimension is gained by varying the nonzero scalar $a$ in $[A,B]=aI$ and two dimensions are lost because in the quotient $\mathfrak{pgl}_n$ each fibre is two-dimensional, so then $\tilde{U}_2$, the image of $U_2$ in the commuting variety of $\mathfrak{pgl}_n$ has dimension $\dim(\tilde{U}_2) = n^2+r+1-2=n^2+r-1$.

Hence, we determine the dimension of the image of the commuting variety of $\mathfrak{gl}_n$:
$$\dim(\mathcal{C}_2(\mathfrak{pgl}_n)) = \max(n^2+n-2,n^2+r-1).$$
Let us also notice that since $n=pr \geq 2r$, the maximal component is the one corresponding to the commuting variety (if $(n,p)=(2,2)$, then the two components have the same dimension and this is the only instance where equality occurs).

We end this section by making a few remarks. If $p \mid n$, then there are several isogeny classes of $SL_n(k)$. It is well known that there are at most $3$ distinct Lie algebras and these are $\mathfrak{pgl}_n, \mathfrak{sl}_n$ and if $p^2 \mid n$ there is $\mathfrak{psl_n} \times k$ as well.
The paper of Levy \cite{Levy} includes the result that $\mathcal{C}_2(\mathfrak{sl}_n)$ is irreducible of dimension $n^2+n-2$ and the proof is analogous to the Motzkin and Taussky argument \cite{Motzkin-Taussky}. Indeed, the only modification occurs in Lemma \ref{opendense}, namely whether any element in $\mathfrak{sl}_n$ commutes with a regular element of zero trace. This is indeed true and it is a Zariski closed condition, so the same proof works.
In the remaining case, notice that the images in $\mathcal{C}_2(\mathfrak{pgl}_n)$ of the pairs in $\mathfrak{gl}_n$ which do not commute is already contained in $\mathfrak{psl_n} \times \mathfrak{psl_n}$. This is because of the fact that $U_2$ is contained in $\mathfrak{sl}_n \times \mathfrak{sl}_n$ by the trace zero result.
Note that clearly $\mathcal{C}_2(\mathfrak{psl_n} \times k)$ is isomorphic to $\mathcal{C}_2(\mathfrak{psl_n}) \times k^2$, so it follows that $\mathcal{C}_2(\mathfrak{psl_n} \times k)$ has two irreducible components, of dimensions $n^2 + r-1$ and $n^2 + n - 4$.

\section{Algebraic groups of Type $A$}
Let $k$ be an algebracially closed field and consider the pairs of elements $(x,y) \in GL_n(k) \times GL_n(k)$ such that $[x,y]=x^{-1}y^{-1}xy=\zeta I$. Note that by taking determinants we have $$1=\det [x,y] = \det (\zeta I) = \zeta^n.$$
Fix $\zeta \in k^*$ a root of unity of order $d$ and assume $d \mid n$. We prove the following result.

\begin{theorem}
The variety $\mathcal{V} = \{ (x,y) \in GL_n(k) \times GL_n(k) \mid [x,y] = \zeta I\}$ is irreducible of dimension $n^2 +n/d$.
\end{theorem}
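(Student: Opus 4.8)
The plan is to run the argument of Section~2 with the first Weyl algebra replaced by a \emph{quantum torus}. Unwinding the group commutator, the relation $[x,y]=\zeta I$ is equivalent to $y^{-1}xy=\zeta x$, i.e.\ $xy=\zeta yx$; thus $\mathcal{V}$ is the variety of those $n$-dimensional representations of the algebra $\mathcal{A}_\zeta:=k\langle x^{\pm1},y^{\pm1}\rangle/(xy-\zeta yx)$ in which $x$ and $y$ act invertibly. Since $\zeta$ has order $d$, conjugating $x$ by $y^d$ returns $x$, so $y^{\pm d}$ is central; symmetrically $x^{\pm d}$ is central, and in fact $Z(\mathcal{A}_\zeta)=k[x^{\pm d},y^{\pm d}]$.

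First I would prove the group-theoretic analogue of the lemma on irreducible $A_1$-modules: every finite-dimensional irreducible $\mathcal{A}_\zeta$-module has dimension exactly $d$, and these are parametrized by the scalars $\alpha,\beta\in k^{*}$ through which $x^d$ and $y^d$ act (the ``clock and shift'' pair). For the upper bound, such a module factors through $\mathcal{A}_\zeta/(x^d-\alpha,\,y^d-\beta)$, which has $k$-basis $\{x^iy^j:0\le i,j\le d-1\}$ and hence dimension $d^2$, so the module has dimension $\le d$; for the lower bound, a primitive $d$-th root of unity exists only when $p\nmid d$, so $t^d-\alpha$ is separable, $x$ is diagonalizable, and conjugation by $y$ scales its eigenvalues by $\zeta$ and therefore permutes them in full orbits of size $d$, forcing the dimension to be a multiple of $d$. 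Hence it equals $d$, and $\mathcal{A}_\zeta/(x^d-\alpha,\,y^d-\beta)\cong M_d(k)$ has a unique simple module. (This is essentially the input one extracts from \cite{ChenLu}.)

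Consequently every $(x,y)\in\mathcal{V}$ gives an $n$-dimensional $\mathcal{A}_\zeta$-module all of whose composition factors are $d$-dimensional, so the eigenvalues of $x$ fall into $r=n/d$ full $\zeta$-orbits and — the analogue of Lemma~\ref{blocksofsizep} — the Jordan type of $x$ is the same along each orbit. Let $\mathcal{V}_0\subseteq\mathcal{V}$ be the open locus where $x$ is regular semisimple; it is nonempty by the clock-and-shift construction with the $r$ orbits distinct. I would then show $\mathcal{V}_0$ is dense in $\mathcal{V}$, paralleling Lemma~\ref{opendense}: given $(x,y)\in\mathcal{V}$, the condition $y^{-1}x'y=\zeta x'$ cuts out a linear subspace $E\ni x$ of $\mathfrak{gl}_n$ (the $\zeta$-eigenspace of $\mathrm{Ad}(y^{-1})$), and when $y$ is semisimple each $\zeta$-orbit of eigenvalues of $y$ contributes a cyclic block to $E$ with separable characteristic polynomial, so the generic element of $E$ is regular semisimple and invertible; since $(x',y)\in\mathcal{V}$ for all $x'\in E$, this puts $(x,y)$ in $\overline{\mathcal{V}_0}$, and the remaining (non-semisimple $y$) cases are reduced to this by a further deformation exactly as in Section~2.

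Finally, on $\mathcal{V}_0$ I would use normal forms: $x$ is conjugate to the block-diagonal matrix whose $i$-th $d\times d$ block is $\lambda_i\,\mathrm{diag}(1,\zeta,\dots,\zeta^{d-1})$, and once $x$ is in this form the solutions $y'$ of $[x,y']=\zeta I$ are exactly the coset $C_{GL_n}(x)\,y_0$, where $y_0$ is the block shift and $C_{GL_n}(x)=k[x]^{\times}$ is an $n$-dimensional torus. Hence the variety $S$ of normal-form pairs is irreducible of dimension $n+r$, the conjugation morphism $GL_n\times S\to\mathcal{V}_0$ is dominant with irreducible source, and combined with the density of $\mathcal{V}_0$ this forces $\mathcal{V}$ to be irreducible, since the image of an irreducible variety under a morphism is irreducible (cf.\ the Fact recalled earlier). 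For the dimension I would check that this morphism has generic fibre of dimension $n$: a pair $(x',y')\in\mathcal{V}_0$ determines the normal form of $x$ up to finitely many choices, then the conjugating element up to $C_{GL_n}(x)$, after which the torus part of $y'$ is forced; therefore $\dim\mathcal{V}=(n^2+n+r)-n=n^2+r=n^2+n/d$. I expect the main obstacle to be the density step — controlling pairs where $x$ or $y$ is far from semisimple — together with the fibre-dimension bookkeeping; notably, unlike the $\mathfrak{gl}_n$ case there is no analogue of the delicate regularity computation for $X$, since here the generic $x$ is already semisimple, and the representation theory of $\mathcal{A}_\zeta$ merely mirrors the Weyl-algebra discussion.
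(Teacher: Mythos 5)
Your quantum-torus setup, the classification of its $d$-dimensional simple modules, and the normal-form dimension count on the locus $\mathcal{V}_0$ of pairs with $x$ regular semisimple are all correct and give the right number $n^2+n/d$ there. The genuine gap is the density of $\mathcal{V}_0$ in $\mathcal{V}$, which is the heart of the proof, and your deformation argument does not establish it. Deforming $x$ inside the $\zeta$-eigenspace $E(y)$ of $\mathrm{Ad}(y^{-1})$ only works when $y$ is semisimple: in general $E(y)=E(y_s)\cap\mathfrak{c}(y_u)$, and the unipotent constraint can exclude all regular semisimple elements. Concretely, take $n=4$, $d=2$, $\zeta=-1$, $y=\mathrm{diag}(J,J')$ with $J,J'$ the $2\times 2$ Jordan blocks at $\mu$ and $-\mu$. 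A direct computation shows $E(y)$ consists exactly of the matrices $x=\bigl(\begin{smallmatrix}0&P\\Q&0\end{smallmatrix}\bigr)$ with $P=\bigl(\begin{smallmatrix}a&b\\0&-a\end{smallmatrix}\bigr)$, $Q=\bigl(\begin{smallmatrix}a'&b'\\0&-a'\end{smallmatrix}\bigr)$; then $x^2=\mathrm{diag}(PQ,QP)$ has the single eigenvalue $aa'$ on each block with off-diagonal entry $ab'-ba'$, generically nonzero, so the generic $x\in E(y)$ is invertible (hence $(x,y)\in\mathcal{V}$) but \emph{not} semisimple, and no element of $E(y)$ is regular semisimple. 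Deforming $y$ first does not repair this: the fibre $C(x)y$ is a coset of the connected group $C(x)$ and does contain a semisimple element, but (in the same example) its \emph{generic} element is not semisimple, so a two-step chain only produces two irreducible subvarieties of $\mathcal{V}$ meeting at a point; without already knowing the component structure of $\mathcal{V}$, that does not place the original pair in $\overline{\mathcal{V}_0}$. Finally, there is no ``further deformation exactly as in Section 2'' to invoke: Lemma \ref{opendense} is a single deformation of the first coordinate and has no analogue covering these degenerate $y$.

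The paper avoids the density question altogether. After conjugating $x$ to $D=\mathrm{diag}(A,\zeta A,\dots,\zeta^{d-1}A)$ and observing that every solution of $[D,y]=\zeta I$ has the form $E\rho$ with $\rho$ the block cyclic shift and $E$ block-diagonal, it shows every point of $\mathcal{V}$ is conjugate to the image of a \emph{commuting} pair in $GL_{n/d}\times GL_{n/d}$; irreducibility of $\mathcal{V}$ then follows from surjectivity of $GL_n\times\mathcal{Q}\to\mathcal{V}$, where the irreducibility of the commuting variety $\mathcal{Q}$ (Motzkin--Taussky) absorbs all the non-semisimple behaviour at once, and the dimension is computed from the first projection. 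If you wish to keep your framework, the fix is the same in spirit: rather than deforming an arbitrary pair directly to the regular semisimple locus, conjugate it into the image of $\{(A,B)\in GL_{n/d}^2 : AB=BA\}$ under $(A,B)\mapsto(\mathrm{diag}(A,\zeta A,\dots,\zeta^{d-1}A),\,\mathrm{diag}(B,\dots,B)\rho)$ and quote the irreducibility of that commuting variety.
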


We first notice that the equation $[x,y] = \zeta I$ has a solution if and only if $y^{-1}xy=\zeta x$, meaning that the conjugacy class of $x$ must be invariant under multiplication by $\zeta$.

\begin{lemma}\label{lemmaconjugate}
    Any solution $x$ to the equation $[x,y]=\zeta I$ is conjugate to an element $D$ of the form 
    $$D = \begin{pmatrix}
A &  & & \\
 & \zeta A & &  \\
 &  & \ddots & \\
 & & & \zeta^{d-1}A
\end{pmatrix},$$
where $A \in GL_{n/d}(k)$ and for distinct $i,j$ the eigenvalues of $\zeta^i A$ and $\zeta^j A$ are distinct.
\end{lemma}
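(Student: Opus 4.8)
The plan is to work directly on the generalized eigenspace decomposition of $x$ and to exploit the relation between $x$ and $y$. First I would rewrite the hypothesis: left-multiplying $x^{-1}y^{-1}xy=\zeta I$ by $x$ gives $y^{-1}xy=\zeta x$, and iterating this yields $y^{-i}xy^{i}=\zeta^{i}x$ for all $i$. In particular the conjugacy class of $x$ is stable under scaling by $\zeta$, so the multiset of eigenvalues of $x$ is invariant under multiplication by $\zeta$.

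Next I would decompose $k^{n}=\bigoplus_{\mu}V_{\mu}$ into generalized eigenspaces of $x$. Since $x\in GL_{n}(k)$, no $\mu$ equals $0$. From $y^{-1}xy=\zeta x$ one computes, for $v\in V_{\mu}$, that $(x-\zeta\mu)^{k}(yv)=\zeta^{k}y(x-\mu)^{k}v$, so $yV_{\mu}=V_{\zeta\mu}$ and $y$ restricts to an isomorphism $V_{\mu}\xrightarrow{\sim}V_{\zeta\mu}$. Hence the eigenvalue set of $x$ is a disjoint union of $\langle\zeta\rangle$-orbits, and since each eigenvalue is nonzero and $\zeta$ has order $d$, each orbit has size exactly $d$, namely $\{\lambda,\zeta\lambda,\dots,\zeta^{d-1}\lambda\}$. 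Choose representatives $\lambda_{1},\dots,\lambda_{m}$ of the distinct orbits.

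Then I would build an adapted basis. For each $j$ fix a basis of $V_{\lambda_{j}}$ and let $A_{j}\in GL(V_{\lambda_{j}})$ be the matrix of $x|_{V_{\lambda_{j}}}$ in it; put $A=\mathrm{diag}(A_{1},\dots,A_{m})$. Using $y^{-i}xy^{i}=\zeta^{i}x$ one gets $x|_{V_{\zeta^{i}\lambda_{j}}}=y^{i}(\zeta^{i}A_{j})y^{-i}$, so transporting the chosen basis of $V_{\lambda_{j}}$ through $y^{i}$ produces a basis of $V_{\zeta^{i}\lambda_{j}}$ in which $x$ acts by $\zeta^{i}A_{j}$. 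Ordering the combined basis of $k^{n}$ as $\big(\bigoplus_{j}V_{\lambda_{j}}\big),\big(\bigoplus_{j}V_{\zeta\lambda_{j}}\big),\dots,\big(\bigoplus_{j}V_{\zeta^{d-1}\lambda_{j}}\big)$, the matrix of $x$ becomes $D=\mathrm{diag}(A,\zeta A,\dots,\zeta^{d-1}A)$. Since all the $V_{\zeta^{i}\lambda_{j}}$ have equal dimension, $\dim\big(\bigoplus_{j}V_{\lambda_{j}}\big)=n/d$, so $A\in GL_{n/d}(k)$. Finally, the eigenvalues of $\zeta^{i}A$ are $\{\zeta^{i}\lambda_{1},\dots,\zeta^{i}\lambda_{m}\}$ with multiplicity; if $\zeta^{i}\lambda_{a}=\zeta^{j}\lambda_{b}$ with $0\le i<j\le d-1$ then $\lambda_{b}$ lies in the $\langle\zeta\rangle$-orbit of $\lambda_{a}$, forcing $a=b$ and then $\zeta^{i}=\zeta^{j}$, a contradiction; hence $\zeta^{i}A$ and $\zeta^{j}A$ have disjoint spectra for $i\neq j$, as required.

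The main obstacle is essentially bookkeeping: checking that $y^{i}$ carries $V_{\lambda_{j}}$ isomorphically onto $V_{\zeta^{i}\lambda_{j}}$ and that these $d$ spaces are pairwise distinct (this is exactly where nonvanishing of the eigenvalues and $\mathrm{ord}(\zeta)=d$ are used), and then verifying that reassembling the transported bases yields precisely the block form $\mathrm{diag}(A,\zeta A,\dots,\zeta^{d-1}A)$ and not merely something conjugate to it. Nothing here is deep, but one must be careful that the dimension count gives $n/d$ and that choosing one representative per orbit makes the spectral-disjointness condition automatic.
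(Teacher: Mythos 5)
Your proof is correct and follows essentially the same route as the paper: observe that $y^{-1}xy=\zeta x$ forces the spectrum of $x$ to be a union of $\langle\zeta\rangle$-orbits of size $d$, pick orbit representatives, and assemble $D$ block-diagonally from the action of $x$ on the corresponding generalized eigenspaces. Your version is simply more detailed than the paper's (which leaves the transport of the Jordan structure from $V_{\lambda}$ to $V_{\zeta^{i}\lambda}$ via powers of $y$, and the resulting dimension count $n/d$, implicit).
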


\begin{proof}
    First note that since $x$ is conjugate to $\zeta x$ the eigenvalues of $x$ are closed under multiplication by $\zeta$. So choose $a_1, \dots, a_m \in k$ to be orbit representatives for the eigenvalues of $x$. Now let $A$ be the Jordan canonical form of the part of $x$ involving only $a_1, \dots, a_m$. Then $x$ is conjugate to $D$.
\end{proof}

\begin{corollary}\label{corollaryW}
    The variety $\mathcal{W} = \{ x \mid x \text{ is conjugate to } \zeta x \}$ is irreducible of dimension $n^2 +n/d -n$.
\end{corollary}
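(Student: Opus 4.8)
The plan is to realize $\mathcal{W}$ as the image of a single conjugation morphism out of an irreducible variety and then extract the dimension from a generic fibre. For $A \in GL_{n/d}(k)$ write
$$D(A) = \begin{pmatrix} A & & & \\ & \zeta A & & \\ & & \ddots & \\ & & & \zeta^{d-1}A \end{pmatrix} \in GL_n(k).$$
Since $\zeta^d = 1$, multiplication by $\zeta$ cyclically permutes the diagonal blocks of $D(A)$, so $\zeta D(A)$ is conjugate to $D(A)$ by a permutation matrix; hence $D(A) \in \mathcal{W}$, and because being conjugate to $\zeta x$ is a conjugation-invariant condition, every $GL_n(k)$-conjugate of $D(A)$ also lies in $\mathcal{W}$. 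Conversely, Lemma \ref{lemmaconjugate} says every element of $\mathcal{W}$ is conjugate to some $D(A)$. Thus, letting $Y \subseteq GL_n(k)$ be the (irreducible) image of $GL_{n/d}(k)$ under $A \mapsto D(A)$, the conjugation map $GL_n(k) \times Y \to \mathcal{W}$, $(g,D) \mapsto gDg^{-1}$, is surjective, hence dominant, and so $\mathcal{W}$ is irreducible by the general Fact recorded in the introduction.

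For the dimension I would apply the fibre-dimension theorem to the dominant morphism
$$\Phi : GL_n(k) \times GL_{n/d}(k) \longrightarrow \overline{\mathcal{W}}, \qquad \Phi(g,A) = g\,D(A)\,g^{-1},$$
working over the open dense locus $\mathcal{A} \subseteq GL_{n/d}(k)$ of matrices whose $n/d$ eigenvalues are distinct and lie in $n/d$ distinct $\langle\zeta\rangle$-orbits of $k^{*}$; this locus is nonempty since $k^{*}$ has infinitely many (finite) $\langle\zeta\rangle$-orbits, and it is open, being the non-vanishing locus of a discriminant-type polynomial. For $A \in \mathcal{A}$ the element $D(A)$ is regular semisimple with $n$ distinct eigenvalues. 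Fix such an $x_0 = \Phi(g_0,A_0)$ and analyze $\Phi^{-1}(x_0)$ by projecting to the $A$-coordinate: the image of this projection is $\{A : D(A) \sim x_0\}$, and comparing eigenvalues, $\mathrm{spec}(A)$ must be a transversal of the $\langle\zeta\rangle$-orbits on $\mathrm{spec}(x_0)$; there are only finitely many such transversals ($d^{\,n/d}$ of them), and for each the corresponding $A$'s form a single regular semisimple $GL_{n/d}(k)$-orbit of dimension $(n/d)^2 - n/d$. Over each admissible $A$, the fibre $\{g : gD(A)g^{-1} = x_0\}$ is a coset of $C_{GL_n(k)}(D(A))$, a maximal torus of dimension $n$. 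Hence the generic fibre of $\Phi$ has dimension $(n/d)^2 - n/d + n$, and
$$\dim \mathcal{W} = \big(n^2 + (n/d)^2\big) - \big((n/d)^2 - n/d + n\big) = n^2 + n/d - n,$$
as claimed.

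The hard part will be the generic-fibre computation, and within it two points deserve care: first, justifying that $\{A : D(A) \sim x_0\}$ really is a finite union of regular semisimple $GL_{n/d}(k)$-orbits — the finiteness coming from there being only finitely many ways to pick orbit representatives among the eigenvalues of $x_0$, and here one should note that $0$ is not an eigenvalue (so each $\langle\zeta\rangle$-orbit of eigenvalues genuinely has size $d$); and second, checking that the locus $\Phi(GL_n(k) \times \mathcal{A})$ is dense in $\overline{\mathcal{W}}$, so that the fibre over a point of it is a generic fibre and the formula $\dim \Phi^{-1}(x_0) = \dim\big(GL_n(k)\times GL_{n/d}(k)\big) - \dim \mathcal{W}$ applies. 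Once these are in place the arithmetic above is routine.
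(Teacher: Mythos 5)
Your proof is correct and follows essentially the same route as the paper: irreducibility via the surjective conjugation map out of $GL_n(k)$ times an irreducible family of block-diagonal models $D(A)$, and the dimension via the generic fibre of that map over a regular semisimple point whose spectrum consists of $n/d$ full $\langle\zeta\rangle$-orbits. The only cosmetic difference is that the paper shrinks the second factor to the diagonal torus $\mathbb{T}_{n/d}$, so its generic fibre is just the $n$-dimensional centralizer torus, whereas your fibre additionally carries the $(n/d)^2-n/d$-dimensional regular semisimple orbit of $A$; the arithmetic agrees.
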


\begin{proof}
    Let $\mathcal{W}_0$ be the set of all elements $A$ of the form described above. Then $\mathcal{W}_0$ is irreducible and there exists a surjection $GL_n(k) \times \mathcal{W}_0 \twoheadrightarrow \mathcal{W}$, so $\mathcal{W}$ is irreducible as well. Now let $\tilde{\mathcal{W}_0} \subseteq \mathcal{W}_0$ consist of all $x$ such that the corresponding $A$ is diagonalizable with distinct eigenvalues (this is an open condition). Then we have a map $GL_n(k) \times \mathbb{T}_{n/d} \rightarrow \mathcal{W}$ which is dominant and a generic fibre has dimension $n$ (here $\mathbb{T}_{n/d}$ is the algebraic torus). Hence we obtain
    $$\dim(\mathcal{W}) = n^2+ \dim (\mathbb{T}_{n/d}) - \dim (\text{fibre}) = n^2 + n/d -n.$$
\end{proof}

Now let $$\rho = \begin{pmatrix}
0_{n/d} & 0_{n/d} & \cdots & 0_{n/d} &I_{n/d}\\
I_{n/d} & 0_{n/d} & \cdots & 0_{n/d} & 0_{n/d} \\
0_{n/d} & I_{n/d} & \cdots & 0_{n/d} & 0_{n/d} \\
\vdots&  \vdots& \ddots & \vdots& \vdots \\
0_{n/d} & 0_{n/d} &\cdots & I_{n/d} &0_{n/d}
\end{pmatrix}$$ be the block matrix  that cyclic
permutes the blocks of $D$, namely
$$\rho^{-1}D\rho = \begin{pmatrix}
\zeta A &  & & &\\
 &  \zeta^2 A& & & \\
 &  & \ddots & &\\
 & & & \zeta^{d-1} A &\\
  & & & & A \\
\end{pmatrix}$$

and notice that
$$[D,\rho] = D^{-1}(\rho^{-1}D\rho) =  \zeta I.$$

Thus, the solutions to the equation $[D,y]=\zeta I$ are the elements of the form $y=E \rho$, where $E$ is block diagonal with each block centralizing $A$. So $y$ is conjugate by an element in $C(D)$ to $E_1 \rho$, where 
$$E_1 = \begin{pmatrix}
Y &  & & \\
 &  I& &  \\
 &  & \ddots & \\
 & & & I
\end{pmatrix}.$$

Therefore, every pair $(x,y) \in \mathcal{V}$ is conjugate to $(D,E_1 \rho)$, where $E_1 D = D E_1$. Now let $$\mathcal{Q} = \{ (A,B) \in GL_{n/d} \times GL_{n/d} \mid AB=BA\}$$ be the commuting variety of pairs of $n/d \times n/d$ invertible matrices. Next, consider the mapping $(A,B) \mapsto (D(A),E_1(B)\rho)$, where $D(A) = diag(A,I,\dots,I)$ and $E_1(B) = diag(B,I,\dots,I)$. Since $\mathcal{Q}$ is irreducible, its image $R$ must be as well.
Since any point in $\mathcal{V}$ is conjugate to a point in $R$, the map $GL_n(k) \times R \twoheadrightarrow \mathcal{V}$ is surjective, so it follows that $\mathcal{V}$ is irreducible.

Finally, in order to compute $\dim(\mathcal{V})$, consider the projection of $\mathcal{V}$ onto the first component
$$\mathcal{V} \twoheadrightarrow \{ x \mid (x,y) \in \mathcal{V}\}.$$
By Corollary \ref{corollaryW} the dimension of the image is $n^2-n+n/d$. For generic $x$ (for instance $x$ regular), the dimension of the fibre is $\dim C(x) = n$ and so we obtain $$\dim (\mathcal{V}) = n^2+n/d.$$

\section*{Acknowledgements}
I am indebted to my advisor, professor Robert Guralnick, for his guidance throughout writing this paper and his important advice. I would also like to thank the referee for reviewing the paper.

\printbibliography

\end{document}